\theoremstyle{definition}
\newtheorem{thm}{Theorem}
\newtheorem{lem}[thm]{Lemma}
\newtheorem{rem}{Remark}
\newtheorem*{cor*}{Corollary}
\numberwithin{dfn}{section}
\numberwithin{thm}{section}
\numberwithin{rem}{section}
\numberwithin{equation}{section}
\numberwithin{ex}{section}
\numberwithin{prob}{section}
\numberwithin{ans}{section}
\numberwithin{assum}{section}
\numberwithin{prop}{section}
\newcommand{\zissu}{\mathbb{R}} 
\newcommand{\argmax}{\mathop{\mathrm{argmax}}}
\newcommand{\argmin}{\mathop{\mathrm{argmin}}}
\title{Sparser Kernel Herding with Pairwise Conditional Gradients \\ without Swap Steps}
\date{}
\author{\name Kazuma Tsuji 
	\email  \href{mailto:kazuma_tsuji@mist.i.u-tokyo.ac.jp}{kazuma\_tsuji@mist.i.u-tokyo.ac.jp}\\
	\addr Graduate School of Information Science and Technology\\ 
	The University of Tokyo \\ 
	Tokyo, Japan
	\AND
	\name Ken'ichiro Tanaka \email \href{mailto:kenichiro@mist.i.u-tokyo.ac.jp}{kenichiro@mist.i.u-tokyo.ac.jp} \\
	\addr Graduate School of Information Science and Technology\\ 
	The University of Tokyo \\
	PRESTO \\
	Japan Science and Technological Agency (JST) \\ 
	Tokyo, Japan
	\AND
	\name Sebastian Pokutta \email \href{mailto:pokutta@zib.de}{pokutta@zib.de} \\
	\addr Institute of Mathematics\\
	Zuse Institute Berlin and Technische Universität Berlin\\
	Berlin, Germany}
\begin{document}
\maketitle

\begin{abstract}
The Pairwise Conditional Gradients (PCG) algorithm is a powerful extension of the Frank-Wolfe algorithm leading to particularly sparse solutions, which makes PCG very appealing for problems such as sparse signal recovery, sparse regression, and kernel herding. Unfortunately, PCG exhibits so-called swap steps that might not provide sufficient primal progress. The number of these bad steps is bounded by a function in the dimension and as such known guarantees do not generalize to the infinite-dimensional case, which would be needed for kernel herding. We propose a new variant of PCG, the so-called Blended Pairwise Conditional Gradients (BPCG). This new algorithm does not exhibit any swap steps, is very easy to implement, and does not require any internal gradient alignment procedures. The convergence rate of BPCG is basically that of PCG if no drop steps would occur and as such is no worse than PCG but improves and provides new rates in many cases. Moreover, we observe in the numerical experiments that BPCG’s solutions are much sparser than those of PCG. We apply BPCG to the kernel herding setting, where we derive nice quadrature rules and provide numerical results demonstrating the performance of our method.
\end{abstract}

\section{Introduction}

Conditional Gradients \citep{polyak66cg} (also: Frank-Wolfe algorithms \citep{fw56}) are an important class of first-order methods for constrained convex minimization, i.e., solving 
$$\min_{x \in P} f(x),$$ 
where $P$ is a compact convex feasible region. These methods usually form their iterates as convex combinations of feasible points and as such do not require (potentially expensive) projections onto the feasible region $P$. Moreover the access to the feasible region is solely realized by means of a so-called \emph{Linear Minimization Oracle (LMO)}, which upon presentation with a linear function $c$ returns $\arg\min_{x \in P} c^\intercal x$. Another significant advantage is that the iterates are typically formed as \emph{sparse} convex combinations of extremal points of the feasible region (sometimes also called \emph{atoms}) which makes this class of optimization algorithms particularly appealing for problems such as sparse signal recovery, structured regression, SVM training, and also kernel herding. Over the recent years there have been significant advances in Frank-Wolfe algorithms providing even faster convergence rates and higher sparsity (in terms of the number of atoms participating in the convex combination) and in particular the \emph{Pairwise Conditional Gradients (PCG)} algorithm \citep{lacoste15} provides a very high convergence speed (both theoretically and in computations) and sparsity. The PCG algorithm exhibits so-called \emph{swap steps}, which are steps in which weight is shifting from one atom to another. These steps do not guarantee sufficient primal progress and hence usually convergence analyses bound their number in order to argue that there is a sufficient number of steps with good progress. Unfortunately, these bounds depend exponentially on the dimension of the feasible region \citep{lacoste15} and require $P$ to be a polytope for this bound to hold at all. This precludes application of PCG to the infinite-dimensional setting and even in the polyhedral setting the worst-case bound might be unappealing. Recently, several works \citep{rinaldi2020unifyfree,combettes20boostfw,MGP2020walking} suggested \lq{}enhancement procedures\rq{} to potentially overcome swap steps by improving the descent directions, however at the cost of significantly more complex algorithms and (still often dimension-dependent) analysis. In contrast, we propose a much simpler modification of the PCG algorithm by combining it with the blending criterion from the Blended Conditional Gradients Algorithm (BCG) of \citet{pok18bcg}. This modified PCG algorithm, which we refer to as \emph{Blended Pairwise Conditional Gradients} does not exhibit swap steps anymore and the convergence rates that we obtain are that which the original PCG algorithm would achieve if swap steps would not occur. As such it improves the convergence rates of PCG and moreover, by eschewing swap steps, this modification provides natural convergence rates for the infinite-dimensional setting, which is important for our application to kernel herding \citep{welling2009herding, 10.5555/3023549.3023562, 10.5555/3042573.3042747}. \emph{Kernel herding} is a particular method for constructing a quadrature formula by running a Conditional Gradient algorithm on the convex subset of a Reproducing Kernel Hillbert Space (RKHS); the obtained solution and the associated convex combination correspond to the respective approximation. It can be considered within the framework of kernel quadrature methods that have been studied in a long line of works, such as e.g., \cite{10.5555/3020652.3020694, oettershagen2017construction, 10.1214/18-STS683}. Accurate quadrature rules with a \emph{small} number of nodes are often desired, and as such the achievable sparsity of a given Conditional Gradient method when used for kernel herding is crucial.

\subsection*{Related Work}

There is a broad literature on conditional gradient algorithms and recently this class of methods regained significant attention with many new results and algorithms for specific setups. Most closely related to our work however are the Pairwise Conditional Gradients algorithm introduced in \citep{lacoste15} as well as the Away-step Frank-Wolfe algorithm \citep{wolfe70,gm86}; the Pairwise Conditional Gradients algorithm arises as a natural generalization of the Away-step Frank-Wolfe algorithm. Moreover, we used the blending criterion from \cite{pok18bcg} to efficiently blend together local PCG steps with global Frank-Wolfe steps.

On the other hand there is only a limited number of works attempting to use and extend Conditional Gradients to kernel herding. \citet{lacoste2015sequential} studied the practical performance of several variants of kernel herding that correspond to the variants of Conditional gradients, such as the Away-step Frank-Wolfe algorithm and Pairwise Conditional Gradients. More recently, \citet{tsuji2021acceleration} proposed a new variant of kernel herding with the explicit goal of obtaining sparse solutions. 

\subsection*{Contribution}

Our contribution can be roughly summarized as follows.

\emph{Pairwise Conditional Gradients without swap steps.} We present a new Pairwise Conditional Gradients algorithm, the Blended Pairwise Conditional Gradients (BPCG) (Algorithm~\ref{alg:BPCG}), that does not exhibit swap steps and provide convergence analyses of this method. In particular, the BPCG algorithm provides improved rates compared to PCG; the lack of swap steps makes the difference of the constant factor. Here we focus on the general smooth convex case and the strongly convex case over polytopes. The results for the general smooth convex case is also applicable to the infinite-dimensional general convex domains.  We hasten to stress though that our convergence analyses can be immediately extended to sharp functions (a generalization of strongly convex functions) as well as uniformly convex feasible regions by combining \citep{kerdreux2018restarting} and \citep{UniformConvexFW_2020}, respectively, with our arguments here; due to space limitations this is beyond the scope of this paper. Additionally, numerical experiments suggest that the BPCG algorithm outputs fairly sparse solutions practically. BPCG offers superior sparsity of the iterates both due to the pairwise steps as well as the BCG criterion that favors local steps, which maintain the sparsity of the solution. 

We also provide a lazified version (see \cite{pok17lazy,BPZ2017jour}) of BPCG (Algorithm~\ref{alg:lazifiedBPCG}) that significantly reduces the number of LMO calls by reusing previously visited atoms at the expense of a small constant factor loss in the convergence rates. The lazified variant is in particular useful when the LMO is expensive as the number of required calls to the LMO can be reduced dramatically in actual computations; see e.g., \citep{pok17lazy,BPZ2017jour,pok18bcg} for the benefits of lazification.

\emph{Sparser Kernel Herding.}
We demonstrate the effectiveness of applying our BPCG algorithm to kernel herding. From a theoretical perspective, we can apply the convergence guarantees for the general smooth convex case mentioned before and obtain state-of-the-art guarantees. Moreover, in numerical experiments we demonstrate that the practical performance of BPCG for kernel herding much exceed the theoretical guarantees. BPCG and the lazified version achieve very fast convergence in the number of nodes which are competitive with optimal convergence rates. In addition, the lazification contributes to reducing computational cost because the LMO in kernel herding is computationally rather expensive.

\emph{Computational Results.} We complement our theoretical analyses as well as the numerical experiments for kernel herding with general purpose computational experiments demonstrating the excellent performance of BPCG across several problems of interest.

\section{Preliminaries}

Let $f: \mathbf{R}^{d} \to \mathbf{R}$ be a differentiable, convex, and $L$-smooth function. 
Recall that $f$ is \emph{$L$-smooth} if 
\begin{align}
f(y) - f(x) - \left< \nabla f(x), \, y-x \right> \leq \frac{L}{2} \| y - x \|^{2}
\notag
\end{align}
for all $x, y \in \mathbf{R}^{d}$. 
In addition, 
$f$ is \emph{$\mu$-strongly convex} if 
\begin{align}
f(y) - f(x) - \left< \nabla f(x), \, y-x \right> \geq \frac{\mu}{2} \| y - x \|^{2}
\notag
\end{align}
for all $x, y \in \mathbf{R}^{d}$. 
Let $P \subset \zissu^d$ be a convex feasible region and the subset  $V(P) \subset P$ satisfy $P= \mathrm{conv}(V(P))$. The notation $\mathrm{conv}(V(P))$ means the convex hull of $V(P)$. 
We assume that $P$ is bounded and its diameter $D$ is given by $D := \sup_{x, y \in P} \| x - y \|$. 
Furthermore, 
let $\delta_P > 0$ be the pyramidal width of $P$ \citep{lacoste15}; we drop the index if the feasible region is clear from the context. For a convex combination $x = \sum_{i=1}^{n} c_{i} \, v_{i}$, as later maintained by the algorithm, let $c[x](v_{i}) \coloneqq c_{i}$. In the following let $x^{\ast} \in P$ denote the (not necessarily unique) minimizer of $f$ over $P$.


\section{Blended Pairwise Conditional Gradients Algorithm}

We will now present the algorithm and its convergence analysis.

\subsection{Algorithm}

We consider the Blended Pairwise Conditional Gradients algorithm (BPCG) shown in Algorithm~\ref{alg:BPCG}. The BPCG algorithm is the same type of algorithm as the Blended Conditional Gradients algorithm in \cite{pok18bcg}. In Algorithm~\ref{alg:BPCG}, if the local pairwise gap $\left<\nabla f(x_t), a_t - s_t \right>$ is smaller than the Frank-Wolfe gap $\left<\nabla f(x_t), x_t - w_t \right>$, a \emph{FW step} (line \ref{FW_step_start}-\ref{alg:updateSt_FW}) is taken. Otherwise, the weights of the active atoms in $S_t$ are optimized by the Pairwise Conditional Gradients (PCG) \emph{locally}. If the step size $\lambda_t$ is larger than $\Lambda_t ^\ast$, the away vertex is removed from the active set $S_t$ and we cal the step \emph{drop step} (line \ref{drop step}). Otherwise we call the step \emph{descent step} (line \ref{descent step}). Descent step and drop step are referred to as \emph{pairwise step} all together.

By the structure of the BPCG algorithm, the sparsity of the solutions is expected since the new atoms are not added to $S_t$ until the local pairwise gap decreases sufficiently. Moreover, since the PCG is implemented locally, BPCG does not exhibit \emph{swap steps} in which weight is shifting from the away atom to the Frank-Wolfe atom. In the PCG, swap steps do not show much progress and the number of swap steps is bounded by the dimension-dependent constant. Therefore, the local implementation of the PCG is significant to extend the pairwise type algorithms to infinite-dimensional cases.

Note that both Algorithm~\ref{alg:BPCG} and Algorithm~\ref{alg:lazifiedBPCG} which is introduced in \autoref{subsection_lazified}  use line search here to simplify the presentation. However both can be run alternatively with the short-step rule, which minimizes the quadratic inequality arising from smoothness (this is precisely the $\lambda^*_t$ in Lemma~\ref{thm:not_drop_step}) but requires knowledge of $L$ or with the adaptive step-size strategy of \citep{pedregosa2018step}, which offers a performance similar to line search at the fraction of the cost; our analysis applies to these two step-size strategies as well.

\begin{algorithm}[ht]
\caption{Blended Pairwise Conditional Gradients (BPCG)}
\label{alg:BPCG}
\begin{algorithmic}[1]
\REQUIRE convex smooth function $f$, start vertex $x_0 \in V(P)$.  
\ENSURE points $x_{1}, \ldots, x_T$ in $P$. 
\STATE $S_{0} \leftarrow \{ x_0 \}$
\FOR {$t = 0$ to $T-1$}
	\STATE $a_t \leftarrow \argmax_{v \in S_t} \left< \nabla f(x_t), v \right>$ \quad \COMMENT {away vertex}
	\STATE $s_t \leftarrow \argmin_{v \in S_t} \left< \nabla f(x_t), v \right>$ \quad \COMMENT {local FW}
	\STATE $w_t \leftarrow \argmin_{v \in V(P)} \left< \nabla f(x_t), v \right>$ \quad \COMMENT {global FW}
	\IF {$\left< \nabla f(x_t), a_t -s_t \right> \geq \left< \nabla f(x_t), x_t -w_t \right>$ } \label{eq:select}
		\STATE $d_t = a_t - s_t$
		\STATE $\Lambda_{t}^{\ast} \leftarrow c[x_{t}](a_{t})$
		\STATE $\lambda_t \leftarrow \argmin_{\lambda \in [0, \, \Lambda_{t}^{\ast}]} f(x_{t} - \lambda d_{t})$ \label{alg:PW}
		\IF {$\lambda_{t} < \Lambda_{t}^{\ast}$}
			\STATE $S_{t+1} \leftarrow S_{t}$  \quad \COMMENT {descent step}\label{descent step}
		\ELSE
			\STATE $S_{t+1} \leftarrow S_{t} \setminus \{ a_{t} \}$ \quad \COMMENT {drop step} \label{drop step}
		\ENDIF
	\ELSE 
		\STATE $d_t = x_t - w_t$ \label{FW_step_start}
		\STATE $\lambda_t  \leftarrow \argmin_{\lambda \in [0, \, 1]} f(x_t - \lambda d_t)$  \label{alg:FW}
		\STATE $S_{t+1} \leftarrow S_t \cup \{ w_t \}$ (or $S_{t+1} \leftarrow \{w_t\}$ if $\lambda_t = 1$) \quad \COMMENT {FW step} \label{alg:updateSt_FW} 
	\ENDIF
	\STATE $x_{t+1} \leftarrow x_t - \lambda_t d_t$
\ENDFOR
\end{algorithmic}
\end{algorithm}

\subsection{Convergence analysis}

The following theorems provide convergence properties of the BPCG algorithm. We first state the general smooth case.

\begin{thm}
\label{thm:only_Lsmooth_Tinv_convergence} Let $P$ be a convex feasible domain of diameter $D$. Assume that $f$ is convex and $L$-smooth. 
Let $\{ x_{i} \}_{i=0}^{T} \subset P$ be the sequence given by the BPCG algorithm (Algorithm~\ref{alg:BPCG}). 
Then, it holds that
\begin{align}
f(x_{T}) - f(x^{\ast}) \leq \frac{4LD^{2}}{T}.
\end{align}
\end{thm}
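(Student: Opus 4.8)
The plan is to track the primal gap $h_t := f(x_t) - f(x^\ast)$, show it obeys the standard $O(1/t)$ Frank--Wolfe recursion on the steps that make good progress, and separately control the number of drop steps by a combinatorial argument on the active set. First I would record three elementary facts that hold on every iteration. Writing $d_t$ for the chosen direction and $g_t := \langle \nabla f(x_t), d_t\rangle$ for its gap, I note that $\|d_t\| \le D$ since $d_t$ is a difference of two points of $P$ in both branches. Next, because $w_t$ minimizes $\langle \nabla f(x_t), \cdot\rangle$ over $V(P)$, convexity gives the Frank--Wolfe gap bound $\langle \nabla f(x_t), x_t - w_t\rangle \ge \langle \nabla f(x_t), x_t - x^\ast\rangle \ge h_t$; combined with the selection rule in line~\ref{eq:select}, the pairwise branch only triggers when $\langle \nabla f(x_t), a_t - s_t\rangle \ge \langle \nabla f(x_t), x_t - w_t\rangle$, so in either branch $g_t \ge h_t$. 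Finally, since $\lambda=0$ is feasible in every line search, $h_{t+1}\le h_t$ throughout.

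The heart of the argument is the per-step progress on FW steps and descent steps (the non-drop steps), which is the content of Lemma~\ref{thm:not_drop_step}: by $L$-smoothness the line search dominates the short step $\lambda_t^\ast = g_t/(L\|d_t\|^2)$ whenever the latter is admissible, giving $h_t - h_{t+1}\ge g_t^2/(2L\|d_t\|^2)\ge h_t^2/(2LD^2)$, while when $\lambda_t^\ast$ exceeds the feasible cap the boundary step yields the even larger decrease $h_t-h_{t+1}\ge h_t/2$. The point needing care is that on a descent step the short step is always admissible: since $\lambda_t<\Lambda_t^\ast$ the line-search optimum is the unconstrained minimizer of the convex map $\lambda\mapsto f(x_t-\lambda d_t)$, and smoothness gives $\langle \nabla f(x_t-\lambda_t^\ast d_t), d_t\rangle \ge g_t - L\lambda_t^\ast\|d_t\|^2 = 0$, so $\lambda_t^\ast\le \lambda_t<\Lambda_t^\ast$ and the quadratic decrease applies. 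Thus every non-drop step satisfies $h_{t+1}\le h_t - \min\{h_t/2,\ h_t^2/(2LD^2)\}$. I would also observe that the first iteration is forced to be a FW step (the singleton $S_0=\{x_0\}$ makes the pairwise gap $\langle\nabla f(x_0), a_0-s_0\rangle$ vanish), and that it already drives $h_1\le LD^2$: in the capped regime $g_0\ge h_0$ forces $h_1\le \tfrac{L}{2}\|d_0\|^2\le \tfrac12 LD^2$, while in the admissible regime $h_0\le g_0\le L\|d_0\|^2\le LD^2$. Consequently $h_t\le LD^2$ for all $t\ge 1$, so for $t\ge 1$ the quadratic branch of the minimum always binds and each non-drop step gives $h_t-h_{t+1}\ge h_t^2/(2LD^2)$.

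To convert this into a rate I need a lower bound on the number of good steps, and here I would count with the active set $S_t$, where $|S_0|=1$. A FW step satisfies $|S_{t+1}|-|S_t|\le 1$ (including the reset case), a descent step leaves the size unchanged, and a drop step decreases it by exactly one; summing these changes and using $|S_T|\ge 1 = |S_0|$ yields $N_{\mathrm{drop}}\le N_{\mathrm{FW}}\le N_{\mathrm{good}}$, where $N_{\mathrm{good}}=N_{\mathrm{FW}}+N_{\mathrm{desc}}$. Hence $T = N_{\mathrm{good}}+N_{\mathrm{drop}}\le 2N_{\mathrm{good}}$, i.e. at least half of all iterations are non-drop steps.

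Finally I would telescope the reciprocal gap over the good steps, all of which (for $t\ge 1$) satisfy the quadratic inequality. Using $h_{t+1}\le h_t$ everywhere and $h_t-h_{t+1}\ge h_t^2/(2LD^2)$ on the $N_{\mathrm{good}}-1$ good steps after the first gives $\frac{1}{h_{t+1}}-\frac{1}{h_t}\ge \frac{1}{2LD^2}$ on each of them, while drop steps never decrease $1/h_t$. Starting from $1/h_1\ge 1/(LD^2)$ and summing, $\frac{1}{h_T}\ge \frac{1}{LD^2}+\frac{N_{\mathrm{good}}-1}{2LD^2}=\frac{N_{\mathrm{good}}+1}{2LD^2}\ge \frac{T/2}{2LD^2}=\frac{T}{4LD^2}$, which rearranges to $h_T\le 4LD^2/T$. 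The main obstacle I anticipate is not any single step but the bookkeeping: pinning down the per-step progress with the correct constant (in particular the feasibility of the short step on descent steps and the first-step reset $h_1\le LD^2$), and then matching the factor from ``half the steps are good'' against the factor $2$ of the vanilla Frank--Wolfe recursion so that the constants combine to exactly $4$.
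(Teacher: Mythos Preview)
Your proposal is correct and follows essentially the same route as the paper: per-step progress $h_t-h_{t+1}\ge h_t^2/(2LD^2)$ on FW and descent steps via Lemma~\ref{thm:not_drop_step} together with $g_t\ge h_t$, the counting bound $N_{\mathrm{drop}}\le N_{\mathrm{FW}}$ from the active-set bookkeeping, and then the standard recursion-to-rate conversion. The only noteworthy difference is that where the paper outsources the last step to Corollary~4.2 of \cite{pok18bcg}, you carry it out explicitly by telescoping $1/h_t$, using the clean observation that the forced initial FW step already yields $h_1\le LD^2$ so the quadratic branch of the recursion binds for all $t\ge 1$; this is a self-contained variant of the same argument and lands on the same constant~$4$.
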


In the case of strongly convex functions $f$ and polyhedral feasible regions $P$ we obtain the following improved convergence rates. Note that in contrast to the analysis of the Pairwise Conditional Gradients algorithm in \citep{lacoste15} we do not encounter swap steps. 

\begin{thm}
	\label{thm:mu_strong_linear_convergence} Let $P$ be a polytope with pyramidal width $\delta$ and diameter $D$. Furthermore, let $f$ be $\mu$-strongly convex and $L$-smooth and consider the sequence $\{ x_{i} \}_{i=0}^{T} \subset P$ obtained by the BPCG algorithm (Algorithm~\ref{alg:BPCG}).
	Then, it holds that
	\begin{align}
		f(x_{T}) - f(x^{\ast}) \leq (f(x_{0}) - f(x^{\ast})) \, \exp \left( - c_{f, P} \, T \right), 
	\end{align}
	where $c_{f,P} := \frac{1}{2} \min \{ \frac{1}{2}, \, \frac{\mu \delta^{2}}{4LD^{2}} \}$.
\end{thm}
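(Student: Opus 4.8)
The plan is to follow the classical linear-convergence template for away-step/pairwise Frank--Wolfe from \citet{lacoste15}, but to exploit the fact that BPCG produces no swap steps so that the step accounting becomes clean. Write $h_t := f(x_t) - f(x^*)$ and classify each iteration according to \ref{eq:select} as a \emph{descent step}, a \emph{drop step}, or a \emph{FW step}. I will call the descent and FW steps \emph{good steps}: on these I prove a geometric contraction $h_{t+1} \le (1-\rho)\,h_t$ with $\rho = \min\{\tfrac12,\ \tfrac{\mu\delta^2}{4LD^2}\}$, while on drop steps I only use monotonicity $h_{t+1}\le h_t$, which follows because the pairwise direction is a descent direction and $\lambda_t\ge 0$ is chosen by line search.

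The first ingredient is a counting argument bounding the drop steps. Since $|S_0|=1$, each FW step adds at most one atom, each drop step removes exactly one atom, and $|S_t|\ge 1$ throughout, the number of drop steps cannot exceed the number of FW steps. Hence the number of good steps satisfies $T_{\mathrm{good}} = T - T_{\mathrm{drop}} \ge T - T_{\mathrm{FW}} \ge T - T_{\mathrm{good}}$, that is $T_{\mathrm{good}}\ge T/2$. Combining the per-step bounds will then give $h_T \le h_0\,(1-\rho)^{T_{\mathrm{good}}} \le h_0\exp(-\rho T/2)$, and since $\rho/2 = c_{f,P}$ this is exactly the claimed estimate; so the whole theorem reduces to the good-step contraction.

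The heart of the argument, and the step I expect to be the main obstacle, is relating the gap along the direction actually chosen by the blending rule to the pairwise gap that the pyramidal width controls. Let $g^{\mathrm{dir}}_t := \langle \nabla f(x_t), d_t\rangle$; by the selection rule in line~\ref{eq:select} this equals $\max\{\langle \nabla f(x_t), a_t-s_t\rangle,\ \langle \nabla f(x_t), x_t-w_t\rangle\}$. I would decompose the true pairwise gap as $\langle \nabla f(x_t), a_t-w_t\rangle = \langle \nabla f(x_t), a_t-s_t\rangle + \langle \nabla f(x_t), s_t-w_t\rangle$, and note that since $s_t$ minimizes $\langle \nabla f(x_t),\cdot\rangle$ over $S_t$ while $x_t$ is a convex combination of atoms of $S_t$, we have $\langle \nabla f(x_t), s_t-x_t\rangle\le 0$ and therefore $\langle \nabla f(x_t), s_t-w_t\rangle \le \langle \nabla f(x_t), x_t-w_t\rangle$. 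This yields $\langle \nabla f(x_t), a_t-w_t\rangle \le 2\,g^{\mathrm{dir}}_t$. Applying the pyramidal-width inequality of \citet{lacoste15} to the pairwise gap $\langle \nabla f(x_t), a_t-w_t\rangle$ then gives $g^{\mathrm{dir}}_t \ge \tfrac{\delta}{2}\,\langle \nabla f(x_t), x_t-x^*\rangle/\|x_t-x^*\|$. The delicate point here is that the away atom $a_t$ is taken over $S_t$ while $w_t$ is the global Frank--Wolfe vertex, so that the combined quantity is precisely the pairwise gap to which the pyramidal-width bound applies; this is exactly where the swap-free, locally-blended structure of BPCG is used.

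From here the remaining steps are routine. Using $\mu$-strong convexity I derive $\langle \nabla f(x_t), x_t-x^*\rangle \ge h_t + \tfrac{\mu}{2}\|x_t-x^*\|^2$, and then an AM--GM estimate gives $\langle \nabla f(x_t), x_t-x^*\rangle^2/\|x_t-x^*\|^2 \ge 2\mu h_t$, so that $(g^{\mathrm{dir}}_t)^2 \ge \tfrac{\mu\delta^2}{2}h_t$. Smoothness along $d_t$ with the short-step size (which line search can only improve upon, cf.\ Lemma~\ref{thm:not_drop_step}) together with $\|d_t\|\le D$ gives, whenever the unconstrained optimal step is feasible (descent steps and interior FW steps), $h_t-h_{t+1}\ge (g^{\mathrm{dir}}_t)^2/(2LD^2) \ge \tfrac{\mu\delta^2}{4LD^2}h_t$; when a FW step is capped at $\lambda=1$, the standard boundary estimate combined with $g^{\mathrm{dir}}_t\ge h_t$ (the global FW gap bounds the primal gap by convexity) yields $h_t-h_{t+1}\ge \tfrac12 h_t$. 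Taking the worse of these two rates produces the factor $1-\rho$ on every good step and, via the counting bound above, completes the proof. The only care required is to verify that each good step falls into exactly one of these two sub-cases and that the pyramidal-width bound is invoked on the correct pairwise gap.
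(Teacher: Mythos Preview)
Your proposal is correct and follows essentially the same approach as the paper: the key inequality $2\langle \nabla f(x_t), d_t\rangle \ge \langle \nabla f(x_t), a_t - w_t\rangle$ is exactly the paper's Lemma~\ref{thm:2_nabla_f_at_wt}, the per-step progress bounds are Lemma~\ref{thm:not_drop_step}, the drop-step accounting $T_{\mathrm{drop}}\le T_{\mathrm{FW}}$ and hence $T_{\mathrm{good}}\ge T/2$ is identical, and the contraction constant $\rho=\min\{\tfrac12,\tfrac{\mu\delta^2}{4LD^2}\}$ matches. The only cosmetic differences are that the paper packages the pyramidal-width/strong-convexity combination into the black-box Lemma~\ref{thm:mu_strong_pyramidal_width} (citing inequalities (23) and (28) of \citet{lacoste15}) whereas you unpack it via the AM--GM step, and that your derivation of the $2g^{\mathrm{dir}}_t$ bound via the single decomposition $\langle\nabla f(x_t),a_t-w_t\rangle = \langle\nabla f(x_t),a_t-s_t\rangle + \langle\nabla f(x_t),s_t-w_t\rangle$ plus the max selection is slightly more uniform than the paper's separate case analysis for pairwise versus FW steps.
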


We prove these theorems by using the following lemmas. 

\begin{lem}[Geometric Strong Convexity, (\cite{lacoste15}, Inequalities (23) and (28) )]
\label{thm:mu_strong_pyramidal_width}
Assume that $f$ is $\mu$-strongly convex and $P$ is a polytope with pyramidal width $\delta$. 
Then, with the notation of Algorithm~\ref{alg:BPCG} the following inequality holds:
\begin{align}
f(x_{t}) - f(x^{\ast}) \leq \frac{\left< \nabla f(x_t), a_t - w_t \right>^{2}}{2 \mu \delta^{2}}.
\end{align}
\end{lem}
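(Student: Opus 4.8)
The plan is to assemble the inequality from two independent pieces which are exactly inequalities (23) and (28) of \cite{lacoste15}: a purely analytic consequence of $\mu$-strong convexity, and a purely geometric consequence of the pyramidal width $\delta$. For the analytic piece I would apply strong convexity with $x = x_t$ and $y = x^\ast$ and rearrange to obtain $f(x_t) - f(x^\ast) \leq \left< \nabla f(x_t), x_t - x^\ast \right> - \frac{\mu}{2}\|x_t - x^\ast\|^2$. Setting $A := \left< \nabla f(x_t), x_t - x^\ast \right>$ and $B := \|x_t - x^\ast\|^2$, the completion of the square $A - \frac{\mu}{2}B = \frac{A^2}{2\mu B} - \frac{(A-\mu B)^2}{2\mu B} \leq \frac{A^2}{2\mu B}$ gives
\[
f(x_t) - f(x^\ast) \leq \frac{\left< \nabla f(x_t), x_t - x^\ast \right>^2}{2\mu \|x_t - x^\ast\|^2}.
\]
This needs only strong convexity; non-optimality of $x_t$ guarantees $A>0$ and $B>0$ so the division is legitimate, and the degenerate case $x_t = x^\ast$ collapses both sides of the lemma to $0$.

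For the geometric piece I would invoke the defining property of the pyramidal width. By construction $w_t$ is the vertex of $P$ that is extremal in the direction $-\nabla f(x_t)$ while $a_t$ is the vertex of the active set $S_t$ least aligned with that direction; hence $\left< \nabla f(x_t), a_t - w_t \right>$ is precisely the pairwise directional width of $S_t$ against $-\nabla f(x_t)$ and is nonnegative (indeed $\left< \nabla f(x_t), a_t \right> \geq \left< \nabla f(x_t), x_t \right> \geq \left< \nabla f(x_t), w_t \right>$ since $a_t \in S_t \subseteq V(P)$ and $x_t \in \mathrm{conv}(S_t)$). The pyramidal-width bound states that this pairwise gap dominates the normalized directional derivative toward the optimum,
\[
\left< \nabla f(x_t), a_t - w_t \right> \geq \delta \cdot \frac{\left< \nabla f(x_t), x_t - x^\ast \right>}{\|x_t - x^\ast\|}.
\]
Both sides are nonnegative, so I may square and rearrange to get $\frac{\left< \nabla f(x_t), x_t - x^\ast \right>^2}{\|x_t - x^\ast\|^2} \leq \frac{\left< \nabla f(x_t), a_t - w_t \right>^2}{\delta^2}$, and substituting into the analytic bound finishes the proof.

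The main obstacle is this second inequality. Unlike the first step it is not a soft consequence of convexity: its proof rests on the precise definition of $\delta$ as a minimum, over admissible active sets, of pyramidal directional widths, together with a geometric argument showing that the away/Frank-Wolfe pair $(a_t, w_t)$ recovers at least a $\delta$-fraction of the progress available toward $x^\ast$. Since \cite{lacoste15} proves exactly this as their inequality (28), I would cite it rather than reconstruct the geometry, after which the analytic step and the final squaring-and-substitution make the lemma immediate.
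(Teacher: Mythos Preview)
Your proposal is correct and matches the paper's treatment: the paper does not prove this lemma at all but simply cites it as the combination of inequalities (23) and (28) from \cite{lacoste15}, and your sketch faithfully reconstructs precisely that argument (the analytic bound from strong convexity is their (23), the pyramidal-width inequality is their (28)). Your handling of the degenerate case $x_t = x^\ast$ and the nonnegativity needed before squaring is also sound.
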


\begin{lem}
\label{thm:not_drop_step}
With the notation of Algorithm~\ref{alg:BPCG}, suppose that step $t$ is not a drop step (line \ref{drop step}). 
Let 
$\lambda_{t}^{\ast} = \frac{\langle \nabla f(x_{t}), \, d_{t} \rangle}{L\| d_{t} \|^{2}}$. 
\begin{enumerate}
\item[(a)]
If step $t$ is either 
a FW step (line \ref{FW_step_start}-\ref{alg:updateSt_FW}) with $\lambda_{t}^{\ast} < 1$ or
a descent step (line \ref{descent step}),
we have
\begin{align}
f(x_{t}) - f(x_{t+1}) \geq \frac{\langle \nabla f(x_{t}), \, d_{t} \rangle^{2}}{2LD^{2}}.
\label{eq:PW_FW_with_small_lambda}
\end{align}

\item[(b)]
If step $t$ is an FW step (line \ref{FW_step_start}-\ref{alg:updateSt_FW}) with $\lambda_{t}^{\ast} \geq 1$, 
we have
\begin{align}
f(x_{t}) - f(x_{t+1}) \geq \frac{1}{2} \langle \nabla f(x_{t}), \, d_{t} \rangle.
\label{eq:FW_with_large_lambda}
\end{align}

\end{enumerate}
\end{lem}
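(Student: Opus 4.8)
The plan is to derive both parts from a single application of $L$-smoothness along the search direction. Writing $g(\lambda) := f(x_t - \lambda d_t)$, smoothness gives the quadratic majorant
\[
g(\lambda) \leq q(\lambda) := f(x_t) - \lambda \langle \nabla f(x_t), d_t \rangle + \frac{L\lambda^2}{2}\|d_t\|^2,
\]
whose unconstrained minimizer is exactly $\lambda_t^* = \langle \nabla f(x_t), d_t \rangle / (L\|d_t\|^2)$, with minimal value $q(\lambda_t^*) = f(x_t) - \langle \nabla f(x_t), d_t \rangle^2 / (2L\|d_t\|^2)$. I would also record at the outset that $d_t$ is a difference of two points of $P$ (either $x_t - w_t$ or $a_t - s_t$), so that $\|d_t\| \leq D$; this is what converts the $\|d_t\|^{-2}$ factor into the $D^{-2}$ appearing in \eqref{eq:PW_FW_with_small_lambda}.

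For part (a), both sub-cases reduce to the single estimate $f(x_{t+1}) \leq q(\lambda_t^*)$, after which substituting the value of $q(\lambda_t^*)$ and using $\|d_t\| \leq D$ yields \eqref{eq:PW_FW_with_small_lambda} at once. In the FW sub-case with $\lambda_t^* < 1$, the scalar $\lambda_t^*$ lies in the line-search interval $[0,1]$, so the minimizer $\lambda_t$ obeys $g(\lambda_t) \leq g(\lambda_t^*) \leq q(\lambda_t^*)$, which is the estimate sought. The descent sub-case is the one that needs care, and I expect it to be the main obstacle: there the line search runs only over $[0, \Lambda_t^*]$ and $\lambda_t^*$ need not be feasible. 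The key observation is that a descent step is characterized by $\lambda_t < \Lambda_t^*$, i.e. the constrained minimizer is strictly interior. Since the pairwise gap $\langle \nabla f(x_t), a_t - s_t \rangle = -g'(0)$ is nonnegative—and is zero only in the trivial case where both sides of \eqref{eq:PW_FW_with_small_lambda} vanish—we also have $\lambda_t > 0$, so $\lambda_t$ is an interior minimizer of the convex function $g$ and hence a global (unconstrained) minimizer. Consequently $g(\lambda_t) \leq g(\lambda_t^*) \leq q(\lambda_t^*)$ continues to hold even when $\lambda_t^*$ exceeds $\Lambda_t^*$, recovering the same estimate. I would stress that it is precisely convexity, not feasibility of $\lambda_t^*$, that does the work here.

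For part (b) the direction is an FW direction with $\lambda_t^* \geq 1$, so the majorant $q$ is nonincreasing on $[0,1]$ and attains its minimum over the interval at the endpoint $\lambda = 1$. The line search over $[0,1]$ therefore gives $f(x_{t+1}) \leq g(1) \leq q(1) = f(x_t) - \langle \nabla f(x_t), d_t \rangle + \frac{L}{2}\|d_t\|^2$. Finally, the hypothesis $\lambda_t^* \geq 1$ rearranges to $L\|d_t\|^2 \leq \langle \nabla f(x_t), d_t \rangle$, and using this to bound the quadratic term by $\tfrac{1}{2}\langle \nabla f(x_t), d_t \rangle$ leaves exactly \eqref{eq:FW_with_large_lambda}. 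Throughout, the only genuinely delicate point is the convexity argument in the descent case; the remainder is bookkeeping with the smoothness majorant.
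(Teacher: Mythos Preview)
Your proposal is correct and follows essentially the same approach as the paper: both apply the $L$-smoothness majorant along $d_t$, compare $f(x_{t+1})$ with the value at $\lambda_t^*$ for part (a), and plug in $\lambda = 1$ together with $L\|d_t\|^2 \leq \langle \nabla f(x_t), d_t\rangle$ for part (b). The only notable difference is cosmetic: the paper splits the descent case into the sub-cases $\lambda_t^* < \Lambda_t^*$ and $\lambda_t^* \geq \Lambda_t^*$, whereas you handle it in one stroke via the interior-minimizer argument (and are in fact slightly more careful than the paper in justifying $\lambda_t > 0$).
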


\begin{proof}
For $\lambda \geq 0$, it follows from the $L$-smoothness that
\begin{align}
f(x_{t} - \lambda d_{t}) \leq f(x_{t}) - \lambda \langle \nabla f(x_{t}), \, d_{t} \rangle + \frac{\lambda^{2}}{2} L \| d_{t} \|^{2}. 
\label{eq:L-smooth_with_d_t}
\end{align}
\begin{enumerate}
\item[(a)]
We begin with a FW step with $\lambda^{\ast}_{t} < 1$. 
By letting $\lambda = \lambda_{t}^{\ast}$ in the RHS of~\eqref{eq:L-smooth_with_d_t}, 
we have inequality~\eqref{eq:PW_FW_with_small_lambda} as follows.
\begin{align}
f(x_{t+1}) 
& \leq f(x_{t}) -  \frac{\langle \nabla f(x_{t}), \, d_{t} \rangle^{2}}{2L \| d_{t} \|^{2}}
\notag \\
& \leq f(x_{t}) -  \frac{\langle \nabla f(x_{t}), \, d_{t} \rangle^{2}}{2L D^{2}}. 
\label{eq:PW_FW_with_small_lambda_derived}
\end{align}
Next, consider a descent step with $\lambda^{\ast}_{t} < \Lambda_{t}^{\ast}$. By $\lambda_t = \argmin_{\lambda \in [0, \, \Lambda_{t}^{\ast}]} f(x_{t} - \lambda d_{t})$ and $\lambda^{\ast}_{t} < \Lambda_{t}^{\ast}$, we have $f(x_{t+1})\leq f(x_{t} - \lambda^{\ast}_{t} d_{t})$. By letting $\lambda = \lambda_{t}^{\ast}$ in the RHS of~\eqref{eq:L-smooth_with_d_t}, we derive the desired inequality.

Finally, consider a descent step with $\lambda^{\ast}_{t} \geq \Lambda_{t}^{\ast}$. 
Since step $t$ is not a drop step, 
$\Lambda_{t}^{\ast} > \lambda_{t}$ holds. 
Here $\lambda_{t}$ is a global minimizer of the convex function $f(x_{t} - \lambda d_{t})$. 
Therefore we have
\(
f(x_{t+1}) 
\leq f(x_{t} - \lambda^{\ast}_{t} d_{t})
\)
and this RHS is bounded by that of~\eqref{eq:PW_FW_with_small_lambda_derived} 
owing to~\eqref{eq:L-smooth_with_d_t}. 

\item[(b)]
The condition $\lambda_{t}^{\ast}=\frac{\langle \nabla f(x_{t}), \, d_{t} \rangle}{L\| d_{t} \|^{2}} \geq 1$ implies $\langle \nabla f(x_{t}), \, d_{t} \rangle \geq L\| d_{t} \|^{2}$. 
By letting $\lambda = 1$ in the RHS of~\eqref{eq:L-smooth_with_d_t}, 
we have inequality~\eqref{eq:FW_with_large_lambda} as follows.
\begin{align}
f(x_{t+1}) 
& \leq 
f(x_{t}) - \langle \nabla f(x_{t}), \, d_{t} \rangle + \frac{1}{2} L \| d_{t} \|^{2}
\notag \\
& \leq 
f(x_{t}) - \frac{1}{2} \langle \nabla f(x_{t}), \, d_{t} \rangle.
\notag
\end{align}
\end{enumerate}
\end{proof}

\begin{lem}
\label{thm:2_nabla_f_at_wt}
For each step $t$, 
an inequality $2 \langle \nabla f(x_{t}), \, d_{t} \rangle \geq \langle \nabla f(x_{t}), \, a_{t} - w_{t} \rangle$ holds. 
\end{lem}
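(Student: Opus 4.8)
The plan is to split the argument according to the selection condition in line~\ref{eq:select}, since the search direction $d_t$ is defined differently in the two branches. Throughout I would write $g \coloneqq \nabla f(x_t)$ for brevity and keep in mind the structural invariant maintained by the algorithm: at every step $x_t \in \mathrm{conv}(S_t)$ with $S_t \subseteq V(P)$ (the active set is initialized as $\{x_0\}$ with $x_0 \in V(P)$ and only ever gains atoms $w_t \in V(P)$).

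First I would record the elementary ordering of the four scalars $\langle g, w_t\rangle$, $\langle g, s_t\rangle$, $\langle g, x_t\rangle$, $\langle g, a_t\rangle$. Since $s_t$ and $a_t$ respectively minimize and maximize $\langle g, \cdot\rangle$ over $S_t$, and $x_t$ is a convex combination of atoms of $S_t$, we get $\langle g, s_t\rangle \le \langle g, x_t\rangle \le \langle g, a_t\rangle$. Moreover $w_t$ minimizes $\langle g, \cdot\rangle$ over the larger set $V(P) \supseteq S_t$, so $\langle g, w_t\rangle \le \langle g, s_t\rangle$. These inequalities, combined with the branching condition, supply everything needed.

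In the pairwise branch, where $d_t = a_t - s_t$ and the selection condition reads $\langle g, a_t - s_t\rangle \ge \langle g, x_t - w_t\rangle$, I would telescope $a_t - w_t = (a_t - s_t) + (s_t - w_t)$, so that the target $2\langle g, d_t\rangle \ge \langle g, a_t - w_t\rangle$ reduces to $\langle g, a_t - s_t\rangle \ge \langle g, s_t - w_t\rangle$. This follows immediately from the selection condition together with $\langle g, x_t\rangle \ge \langle g, s_t\rangle$, since $\langle g, a_t - s_t\rangle \ge \langle g, x_t - w_t\rangle \ge \langle g, s_t - w_t\rangle$.

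In the Frank--Wolfe branch, where $d_t = x_t - w_t$ and the selection condition fails, i.e.\ $\langle g, a_t - s_t\rangle < \langle g, x_t - w_t\rangle$, I would instead telescope $a_t - w_t = (a_t - x_t) + (x_t - w_t)$, reducing the claim to $\langle g, x_t - w_t\rangle \ge \langle g, a_t - x_t\rangle$. Here $\langle g, x_t\rangle \ge \langle g, s_t\rangle$ gives $\langle g, a_t - x_t\rangle \le \langle g, a_t - s_t\rangle$, and the failed selection condition bounds the right-hand side by $\langle g, x_t - w_t\rangle$, closing the case. The computation is routine; the only point requiring care is the invariant $x_t \in \mathrm{conv}(S_t)$ with $S_t \subseteq V(P)$, since this is exactly what places $\langle g, x_t\rangle$ between $\langle g, s_t\rangle$ and $\langle g, a_t\rangle$ and makes both telescopings go through. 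I do not expect any genuine obstacle beyond correctly invoking the selection condition to recover the factor of $2$.
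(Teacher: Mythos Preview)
Your proof is correct and essentially identical to the paper's. Both split on the branching condition, use the convex-combination inequality $\langle \nabla f(x_t), x_t\rangle \ge \langle \nabla f(x_t), s_t\rangle$, and combine it with the (satisfied or failed) selection criterion; your ``telescoping'' of $a_t - w_t$ is just a rephrasing of the paper's ``add the left-hand side to both sides'' manoeuvre.
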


\begin{proof}
First, 
suppose that step $t$ is the pairwise step. 
By the definition of the algorithm, 
we have 
$\left< \nabla f(x_t), a_t -s_t \right> \geq \left< \nabla f(x_t), x_t -w_t \right>$. 
In addition, 
by the definitions of $x_{t}$ and $s_{t}$, 
we have
\begin{align}
\left< \nabla f (x_t), x_t \right> &\geq \left< \nabla f(x_t), s_t \right>.
\label{eq:nabla_x_t_vs_s_t}
\end{align}
Hence we have $\left< \nabla f(x_t), a_t -s_t \right> \geq \left< \nabla f(x_t), s_t -w_t \right>$ 
and by adding the LHS of this inequality to both sides we have
\begin{equation}\notag 
2\left< \nabla f(x_t), a_t -s_t \right> \geq \left< \nabla f(x_t), a_t - w_t \right>. 
\end{equation}

Next, 
suppose that step $t$ is the Frank-Wolfe step. 
By the definition of the algorithm, 
we have $\left< \nabla f(x_t), x_t -w_t \right> > \left< \nabla f(x_t), a_t -s_t \right>$. 
It follows from this inequality and~\eqref{eq:nabla_x_t_vs_s_t} that
$\left< \nabla f(x_t), x_t -w_t \right> > \left< \nabla f(x_t), a_t - x_t \right>$.
By adding the LHS of this inequality to both sides we have 
\begin{equation}\notag 
2\left< \nabla f(x_t), x_t -w_t \right> > \left< \nabla f(x_t), a_t - w_t \right>.
\end{equation}
\end{proof}

We are now in a position to prove 
Theorems~\ref{thm:mu_strong_linear_convergence} and~\ref{thm:only_Lsmooth_Tinv_convergence}. 
Let $h_{t} := f(x_{t}) - f(x^{\ast})$. 
Then, 
by the convexity of $f$ and the definition of $w_{t}$, 
we have 
\begin{align}
h_{t} 
\leq \langle \nabla f(x_{t}), \, x_{t} - x^{\ast} \rangle 
\leq \langle \nabla f(x_{t}), \, x_{t} - w_{t} \rangle. 
\label{eq:ht_leq_nabla_f_dt}
\end{align}

We start with the slightly more involved proof of Theorem~\ref{thm:mu_strong_linear_convergence}.

\begin{proof}[Proof of Theorem~\ref{thm:mu_strong_linear_convergence}]
First, 
we focus on the case that step $t$ is not a drop step, which is considered in Lemma~\ref{thm:not_drop_step}. 
Suppose that step $t$ is case (a) of Lemma~\ref{thm:not_drop_step}. 
By combining inequality~\eqref{eq:PW_FW_with_small_lambda}, 
Lemmas~\ref{thm:2_nabla_f_at_wt} and~\ref{thm:mu_strong_pyramidal_width}, 
we have
\begin{align}
h_{t} - h_{t+1} 
& \geq \frac{\langle \nabla f(x_{t}), \, d_{t} \rangle^{2}}{2LD^{2}} 
\geq \frac{\left< \nabla f(x_t), a_t - w_t \right>^{2}}{8LD^{2}}
\notag \\
& \geq \frac{\mu \delta^{2}}{4LD^{2}} h_{t}.
\label{eq:ht_ht1_geq_mu_delta_ht}
\end{align}
Then, consider case (b) of Lemma~\ref{thm:not_drop_step}, 
where $d_{t} = x_{t} - w_{t}$. 
By inequalities~\eqref{eq:FW_with_large_lambda} and~\eqref{eq:ht_leq_nabla_f_dt}, 
we have 
\begin{align}
h_{t} - h_{t+1} \geq \frac{1}{2} \langle \nabla f(x_{t}), \, d_{t} \rangle \geq \frac{1}{2} h_{t}.
\label{eq:ht_ht1_geq_half_ht}
\end{align}
Therefore 
by letting 
$\hat{c}_{f,P} = \min \{ \frac{1}{2}, \, \frac{\mu \delta^{2}}{4LD^{2}} \}$
we can deduce from~\eqref{eq:ht_ht1_geq_mu_delta_ht} and~\eqref{eq:ht_ht1_geq_half_ht} that 
\begin{align}
h_{t+1} \leq (1 - \hat{c}_{f, P}) h_{t}
\notag
\end{align}
if step $t$ is not a drop step. 

Next, we take the drop steps into account. 
If step $t$ is a drop step, it is clear that $h_{t+1} \leq h_{t}$. 
In addition, 
we bound the number of the drop steps in the algorithm. 
Let $T_{\mathrm{FW}}$, $T_{\mathrm{desc}}$, and $T_{\mathrm{drop}}$ be 
the numbers of the FW steps, descent steps, and drop steps, respectively. 
Note that $T = T_{\mathrm{FW}} + T_{\mathrm{desc}} + T_{\mathrm{drop}}$. 
Since $\# S_{T} \geq 1$, an inequality $T_{\mathrm{drop}} \leq T_{\mathrm{FW}}$ holds. 
Therefore we have
\begin{align}
T 
& = T_{\mathrm{FW}} + T_{\mathrm{desc}} + T_{\mathrm{drop}} 
\leq 2 T_{\mathrm{FW}} + T_{\mathrm{desc}}
\notag \\
& \leq 2 (T_{\mathrm{FW}} + T_{\mathrm{desc}}). 
\notag
\end{align}

Finally, by combining the above arguments, we have
\begin{align}
h_{T} 
& \leq (1 - \hat{c}_{f,P})^{T_{\mathrm{FW}} + T_{\mathrm{desc}}} h_{0}
\leq (1 - \hat{c}_{f,P})^{T/2} h_{0}
\notag \\
& \leq \exp \left( - \frac{\hat{c}_{f,P}}{2} T \right) h_{0}.
\notag
\end{align}
\end{proof}

Next, we provide the proof of Theorem~\ref{thm:only_Lsmooth_Tinv_convergence}. 
\begin{proof}[Proof of Theorem~\ref{thm:only_Lsmooth_Tinv_convergence}]
First, suppose that step $t$ is a descent step. By the algorithm of BPCG, it holds
$$ \left< \nabla f(x_t), d_t \right> \geq \left< \nabla f(x_t), x_t - w_t \right> .$$
Combining this with Lemma~\ref{thm:not_drop_step} (a) and $0\leq h_t \leq \left< \nabla f(x_t), x_t - w_t \right> $ ((\ref{eq:ht_leq_nabla_f_dt})), we have
\begin{align}
h_t - h_{t+1} &\geq  \frac{1}{2LD^2} \left< \nabla f(x_t), d_t \right>^2  \notag \\
&\geq  \frac{1}{2LD^2} \left< \nabla f(x_t), x_t - w_t \right> ^2 \notag \\
&\geq \frac{1}{2LD^2} h_t ^2. \label{descent_steps_bound_thm3.1}
\end{align}
Next consider the case where step $t$ is a FW step. First, by the 
$L$-smoothness of $f$ and (\ref{eq:ht_leq_nabla_f_dt}), we have
\begin{align*} f(x_{t} - \lambda (x_{t} - w_t) ) 
&\leq f(x_{t}) - \lambda \left< \nabla f(x_{t}), x_t - w_t \right>
+\frac{L}{2} \lambda ^2 \| x_t  - w_t \|^2\\
&\leq f(x_{t}) - \lambda \left< \nabla f(x_{t}), x_t - w_t \right> + \frac{\lambda^{2}}{2} L D^{2}  \\
&\leq f(x_{t}) - \lambda h_t + \frac{\lambda^{2}}{2} L D^{2}
\end{align*}
for $\lambda \geq 0$.
Subtracting $f(x^{\ast})$ from both sides, we get
\begin{equation}\label{fw_Lsmooth_bound}
f(x_{t} - \lambda (x_{t} - w_t) ) - f(x^{\ast}) \leq h_t - \lambda h_t + \frac{\lambda^{2}}{2} L D^{2}.
\end{equation}
Consider the following two cases:
\begin{description}
\item[(i) $h_t \leq LD^2$]
\ \\
By the definition of $\lambda_t$, 
\begin{equation}\label{stepsize_ineq}
 h_{t+1} = f(x_t - \lambda_t(x_t - w_t)) - f(x^{\ast})\leq  f(x_t - \lambda(x_t - w_t)) - f(x^{\ast})
\end{equation} 
for $\lambda \in [0, 1]$. Using  (\ref{fw_Lsmooth_bound}) and (\ref{stepsize_ineq}) for $\lambda=\frac{h_t}{LD^2}\leq 1$, we have 
\begin{equation}\label{first_inequality_fw_step_3.1}
 h_{t+1}\leq f\left(x_t -\frac{h_t}{LD^2}(x_t - w_t)\right) - f(x^{\ast})\leq   h_t - \frac{h_t ^2}{2LD^2}.
 \end{equation}
\item[(ii) $h_t \geq LD^2$]
\ \\
Combining (\ref{fw_Lsmooth_bound}) with (\ref{stepsize_ineq}) for $\lambda=1$, we have
\begin{equation}\label{second_inequality_fw_step_3.1}
h_{t+1} \leq f\left(x_t - 1\cdot(x_t - w_t)\right) - f(x^{\ast})\leq \frac{LD^2}{2}.
\end{equation}
\end{description}
By (\ref{first_inequality_fw_step_3.1}) and (\ref{second_inequality_fw_step_3.1}), we have
\begin{equation}\label{fw_steps_bound_thm3.1}
h_{t+1}\leq 
\begin{cases}
h_t - \frac{h_t ^2}{2LD^2} &(h_t \leq LD^2),\\
\frac{LD^2}{2}\leq \frac{h_t}{2} &(\mathrm{otherwise}).
\end{cases}
\end{equation}

For a iteration $T$, we define $T_{\mathrm{FW}}, T_{\mathrm{desc}}$ and $T_{drop}$ in the same way as the proof of Theorem~\ref{thm:mu_strong_linear_convergence}.  Using (\ref{descent_steps_bound_thm3.1}) and (\ref{fw_steps_bound_thm3.1}), we can show 
\begin{equation}\label{desc_fw_bound}
h_{T} \leq \frac{2LD^2}{T_{\mathrm{desc}}+T_{\mathrm{FW}}}.
\end{equation}
This can be shown just in the same way as the proof of Corollary 4.2 in \cite{pok18bcg}; the value $4 L_f$ in the proof is replaced by $2LD^2$ in this case.

Finally, as shown in the proof of Theorem~\ref{thm:mu_strong_linear_convergence}, $T \leq 2(T_{\mathrm{desc}}+T_{\mathrm{FW}})$ holds. By substituting this to (\ref{desc_fw_bound}), we have
$$ h_{T} \leq \frac{4LD^2}{T} .$$
\end{proof}

Although  we showed Theorem~\ref{thm:only_Lsmooth_Tinv_convergence} in the Euclidean case for the simplicity of the argument,  it is easy to see from the proof of Theorem~\ref{thm:only_Lsmooth_Tinv_convergence} that the convergence guarantee is also applicable to the infinite-dimensional general convex feasible regions. 

As shown in Theorem~\ref{thm:only_Lsmooth_Tinv_convergence} and Theorem~\ref{thm:mu_strong_linear_convergence}, the BPCG algorithm achieves a convergence rate which is no worse than that of the PCG. In addition, since the BPCG does not exhibit swap steps,
the convergence rate of BPCG including (possibly dimension-dependent) constant factors is considered to be better than that of the PCG especially when the dimension of the feasible region is high: PCG's convergence rate (see \citep{lacoste15}) is of the form $h_t \leq h_0 \exp(-\rho k(t))$, where $k(t) \geq t / (3 |\mathcal A|! +1)$, where $\mathcal A$ is the set of vertices generating the polytope. Even for $0/1$-polytopes in $n$-dimensional space, this can be as bad as $2^n$ and the factor $(2^n)!$ is even worse.  In fact, this dimension-dependence in the rate is also the reason the convergence proof of PCG does not generalize to infinite-dimensional cases. As such BPCG's convergence rate is much more in line with that of the Away-Step Frank-Wolfe algorithm (see \citep{lacoste15}). Moreover, since the iterations of BPCG include many local updates in which no new atoms are added, it is expected that the BPCG algorithm outputs sparser solutions than the PCG algorithm in terms of the support size of the supporting convex combination. 

\begin{rem}
The constant factor $\frac{1}{2LD^2}$ of the  bound (\ref{eq:PW_FW_with_small_lambda}) in Lemma~\ref{thm:not_drop_step} does not depend on the dimension of feasible domains, which is important to guarantee $O(\frac{1}{T})$ convergence in infinite-dimensional cases. The similar BCG algorithm in \cite{pok18bcg} employs Simplex Gradient Descent (SiGD) instead of local PCG steps. However, the lower bound for the progress of SiGD includes a dimension-dependent term and we cannot guarantee  $O(\frac{1}{T})$ convergence in infinite-dimensional cases in general for BCG.
\end{rem}

\subsection{Lazified version of BPCG}\label{subsection_lazified}

We can consider the lazified version (see \cite{pok17lazy,BPZ2017jour}) of the BPCG
as shown by Algorithm~\ref{alg:lazifiedBPCG}. It employs the estimated Frank-Wolfe gap $\Phi_t$ instead of computing the Frank-Wolfe gap in each iteration. The lazification technique helps reduce the number of access to LMOs, which improves the computational efficiency of Algorithm~\ref{alg:lazifiedBPCG} since we only need to access the active set $S_t$ when the pairwise gap $\left< \nabla f(x_t), a_t - s_t \right>$ is larger than $\Phi_t$.

\setcounter{algorithm}{1}
\begin{algorithm}[H]
\caption{Lazified BPCG}
\label{alg:lazifiedBPCG}
\begin{algorithmic}[1]
\REQUIRE convex smooth function $f$, start vertex $x_0 \in V(P)$, accuracy $J \geq 1$.  
\ENSURE points $x_{1}, \ldots, x_T$ in $P$ 
\STATE $\Phi_0 \leftarrow \max_{v\in P} \langle \nabla f(x_0) , \, x_0 - v \rangle/2$
\STATE $S_{0} \leftarrow \{ x_0 \}$
\FOR {$t = 0$ to $T-1$}
	\STATE $a_t \leftarrow \argmax_{v \in S_t} \left< \nabla f(x_t), v \right>$ \quad \COMMENT {away vertex}
	\STATE $s_t \leftarrow \argmin_{v \in S_t} \left< \nabla f(x_t), v \right>$ \quad \COMMENT {local FW}
	\IF {$\left< \nabla f(x_t), a_t -s_t \right> \geq \Phi_t$ } \label{st:if_local}
		\STATE $d_t = a_t - s_t$
		\STATE $\Lambda_{t}^{\ast} \leftarrow c[x_{t}](a_{t})$
		\STATE $\lambda_t \leftarrow \argmin_{\lambda \in [0, \, \Lambda_{t}^{\ast}]} f(x_{t} - \lambda d_{t})$ 
		\STATE $x_{t+1} \leftarrow x_t - \lambda_t d_t$
		\STATE $\Phi_{t+1} \leftarrow \Phi_{t}$
		\IF {$\lambda_{t} < \Lambda_{t}^{\ast}$}
			\STATE $S_{t+1} \leftarrow S_{t}$ \quad \COMMENT {descent step}
		\ELSE
			\STATE $S_{t+1} \leftarrow S_{t} \setminus \{ a_{t} \}$
			\quad \COMMENT {drop step}
		\ENDIF
	\ELSE 
		\STATE $w_t \leftarrow \argmin_{v \in V(P)} \left< \nabla f(x_t), v \right>$ \COMMENT {global FW} \label{st:LMO_for_gFW}
		\IF{$\left< \nabla f(x_t), x_t - w_t\right> \geq \Phi_t /J$} \label{gap_fw_condition}
			\STATE $d_t = x_t - w_t$ 
			\STATE $\lambda_t  \leftarrow \argmin_{\lambda \in [0, 1]} f(x_t - \lambda d_t)$ 
			\STATE $x_{t+1} \leftarrow x_t - \lambda_t d_t$
			\STATE $\Phi_{t+1} \leftarrow \Phi_{t}$
			\STATE $S_{t+1} \leftarrow S_t \cup \{ w_t \}$
			\quad \COMMENT {FW step}
		\ELSE
			\STATE $x_{t+1} \leftarrow x_{t}$
			\STATE $\Phi_{t+1} \leftarrow \Phi_t /2$
			\STATE $S_{t+1} \leftarrow S_{t}$
			\quad \COMMENT {gap step}
		\ENDIF
	\ENDIF
\ENDFOR
\end{algorithmic}
\end{algorithm}

The theoretical analysis of the lazified BPCG can be done in the almost same way as  the proof of Theorem 3.1 in \cite{pok18bcg} for the strongly convex case; the general smooth case follows similarly. For the descent step, the analysis differs, but we can use Lemma~\ref{thm:not_drop_step}. For the only $L$-smooth case, we can show in a similar as the proof of the strongly convex case.
As a result, 
we can show the following theorem. 

\begin{thm}
\label{thm:lazifiedBPCG}
Let $P$ be a convex feasible domain with diameter $D$.
Furthermore, let $f$ be a $L$-smooth convex function and consider the sequence $\{ x_{i} \}_{i=0}^{T} \subset P$ obtained from the lazified BPCG algorithm (Algorithm~\ref{alg:lazifiedBPCG}). 
\begin{description}
\item[Case (A)]
If $f$ is $\mu$-strongly convex and $P$ is a polytope with pyramidal width $\delta > 0$, we have
\begin{align}
f(x_{T}) - f(x^{\ast}) = O \left( \exp \left( -c\, T \right) \right)
\quad
(T \to \infty)
\notag 
\end{align}
for a constant $c > 0$ independent of $T$. 
\item[Case (B)]
If $f$ is only convex and $L$-smooth, 
we have
\begin{align}
f(x_{T}) - f(x^{\ast}) = O \left( \frac{1}{T} \right)
\quad
(T \to \infty).
\notag
\end{align}
\end{description}
\end{thm}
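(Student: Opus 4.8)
The plan is to follow the convergence proof of Blended Conditional Gradients (Theorem~3.1 of \cite{pok18bcg}), substituting the Simplex Gradient Descent progress estimate used there with the pairwise bound of Lemma~\ref{thm:not_drop_step}, and tracking the surrogate Frank-Wolfe gap $\Phi_t$ in place of the exact gap. The first step is to record the bookkeeping for $\Phi_t$: it starts at half the exact Frank-Wolfe gap at $x_0$, is left unchanged on descent, drop, and Frank-Wolfe steps, and is halved only on a gap step. The central invariant to prove, by induction on $t$, is $h_t \le 2\Phi_t$, where $h_t := f(x_t) - f(x^\ast)$. On a gap step the triggering condition $\langle \nabla f(x_t), x_t - w_t\rangle < \Phi_t/J$ together with $h_t \le \langle \nabla f(x_t), x_t - w_t\rangle$ (convexity, as in~\eqref{eq:ht_leq_nabla_f_dt}) gives $h_{t+1} = h_t < \Phi_t/J \le 2\Phi_{t+1}$; on every other step $h_t$ is non-increasing while $\Phi_t$ is unchanged, so the invariant propagates.

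Next I would quantify each step type. By the test on line~\ref{st:if_local}, a descent step has $\langle \nabla f(x_t), d_t\rangle \ge \Phi_t$, so Lemma~\ref{thm:not_drop_step}(a) gives $h_t - h_{t+1} \ge \Phi_t^2/(2LD^2)$; by the test on line~\ref{gap_fw_condition}, a Frank-Wolfe step has $\langle \nabla f(x_t), d_t\rangle \ge \Phi_t/J$, so Lemma~\ref{thm:not_drop_step}(a)-(b) give $h_t - h_{t+1} \ge \min\{\Phi_t^2/(2LD^2J^2),\, \Phi_t/(2J)\}$; a drop step only decreases $h_t$; and a gap step leaves $h_t$ fixed but halves $\Phi_t$. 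The drop steps are counted exactly as in the proof of Theorem~\ref{thm:mu_strong_linear_convergence}: since each Frank-Wolfe step adds at most one atom and each drop step removes one while $\#S_t \ge 1$, we have $T_{\mathrm{drop}} \le T_{\mathrm{FW}}$. Because $\Phi_t$ halves on each gap step and $h_t \le 2\Phi_t$, the number of gap steps needed to reach accuracy $\varepsilon$ is $O(\log(\Phi_0/\varepsilon))$.

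With these ingredients I would assemble the two cases. For Case (B), I would note that a gap step can be triggered only when $h_t \le \langle \nabla f(x_t), x_t - w_t\rangle < \Phi_t/J$, so whenever $h_t$ is not yet small the algorithm takes a productive (descent or Frank-Wolfe) step; using $\Phi_t \ge h_t/2$ these steps obey $h_{t+1} \le h_t - c' h_t^2$ for a constant $c'$ of order $1/(LD^2J^2)$, and the standard $O(1/T)$ induction, exactly as in the proof of Theorem~\ref{thm:only_Lsmooth_Tinv_convergence} and Corollary~4.2 of \cite{pok18bcg} with $2LD^2$ in place of $4L_f$, combined with $T \le 2(T_{\mathrm{desc}}+T_{\mathrm{FW}}) + T_{\mathrm{gap}}$ and the logarithmic bound on $T_{\mathrm{gap}}$, yields $h_T = O(1/T)$. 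For Case (A), the aim is to upgrade the productive-step bounds to a multiplicative decrease $h_{t+1} \le (1-c)h_t$, so that only $O(1)$ productive steps occur per halving of $\Phi_t$ and hence $T = O(\log(1/\varepsilon))$, which is the claimed linear rate; the intended route is geometric strong convexity (Lemma~\ref{thm:mu_strong_pyramidal_width}), $h_t \le \langle \nabla f(x_t), a_t - w_t\rangle^2/(2\mu\delta^2)$, together with the decomposition $\langle \nabla f(x_t), a_t - w_t\rangle \le \langle \nabla f(x_t), a_t - s_t\rangle + \langle \nabla f(x_t), x_t - w_t\rangle$, which forces the larger of the local pairwise gap and the Frank-Wolfe gap to be $\Omega(\sqrt{\mu\delta^2 h_t})$.

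The step I expect to be the main obstacle is precisely this Case (A) upgrade, because the lazified selection rule compares the \emph{local} pairwise gap against the surrogate $\Phi_t$ rather than against the exact Frank-Wolfe gap, so the clean inequality of Lemma~\ref{thm:2_nabla_f_at_wt} (which drove the linear rate in Theorem~\ref{thm:mu_strong_linear_convergence}) is no longer immediately available. The delicate case is a descent step taken while the Frank-Wolfe gap dominates the local gap: there the direct pairwise progress $\Phi_t^2/(2LD^2)$ is only quadratic in $h_t$, and one must argue, through the surrogate $\Phi_t$, the invariant $h_t \le 2\Phi_t$, and the fact that the active set is frozen across consecutive descent steps so that the local PCG itself contracts over $\mathrm{conv}(S_t)$, that such steps still feed the telescoping with the global pyramidal width $\delta$. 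Verifying that the local-PCG bound of Lemma~\ref{thm:not_drop_step} slots into the \cite{pok18bcg} template in place of the Simplex Gradient Descent estimate without introducing any dimension-dependence into the constant $c$ is the crux of the argument.
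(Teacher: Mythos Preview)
Your Case~(B) outline is essentially sound and close to the paper's argument: the invariant $h_t\le 2\Phi_t$, the per-step bounds from Lemma~\ref{thm:not_drop_step}, the counting $T_{\mathrm{drop}}\le T_{\mathrm{FW}}$, and the logarithmic bound on gap steps are exactly what is used. The paper organizes the computation epoch-by-epoch (bounding $2N^t_{\mathrm{FW}}+N^t_{\mathrm{desc}}$ in each epoch and summing a geometric series in $1/\Phi_t$), whereas you convert everything to a recursion $h_{t+1}\le h_t-c'h_t^2$ via $\Phi_t\ge h_t/2$; both routes lead to the same $O(1/T)$ conclusion.

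Your Case~(A) plan, however, has the gap you yourself flag, and the resolution is not the one you are reaching for. Trying to establish a per-step multiplicative contraction $h_{t+1}\le(1-c)h_t$ does not go through: during a descent step the algorithm never evaluates the global Frank--Wolfe gap, so the decomposition $\langle\nabla f(x_t),a_t-w_t\rangle\le\langle\nabla f(x_t),a_t-s_t\rangle+\langle\nabla f(x_t),x_t-w_t\rangle$ cannot be leveraged at that step, and the progress $\Phi_t^2/(2LD^2)$ is only quadratic in $h_t$. No argument about the active set being frozen over consecutive descent steps will recover the global pyramidal width $\delta$ here.

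The paper's fix is to apply geometric strong convexity \emph{only at the gap step}, not at every productive step. At a gap step both tests have just failed, so the local pairwise gap is $<\Phi_t$ and the Frank--Wolfe gap is $<\Phi_t/J\le\Phi_t$; combining these with $\langle\nabla f(x_t),s_t\rangle\le\langle\nabla f(x_t),x_t\rangle$ gives $\langle\nabla f(x_t),a_t-w_t\rangle< 2\Phi_t$, and Lemma~\ref{thm:mu_strong_pyramidal_width} then yields the \emph{quadratic} bound $h_{t'}\le C\,\Phi_t^2/(\mu\delta^2)$ at the start of the next epoch. Since every descent or FW step inside that epoch removes at least $\Phi_t^2/(2LD^2J^2)$ from $h$, the number of productive steps per epoch is bounded by a constant of order $LD^2J^2/(\mu\delta^2)$, independent of $\Phi_t$. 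With $O(\log(\Phi_0/\varepsilon))$ epochs this gives $T=O(\log(1/\varepsilon))$, i.e.\ the claimed linear rate. In short: do not chase a per-step contraction; use the gap step to turn the additive $\Omega(\Phi_t^2)$ progress into a constant number of steps per halving of $\Phi_t$.
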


\begin{proof}
The proof tracks that of \cite{pok18bcg}, especially for Case(A).

We first consider Case (B). In the same way as \cite{pok18bcg}, we divide the iteration into sequences of epochs that are demarcated by the \emph{gap steps} in which the value $\Phi_t$ is halved. We bound the number of iterations in each epoch.

By the convexity of $f$ and the definition of $w_t$, we have
$$f(x_t) - f(x^{\ast}) \leq \left<\nabla f(x_t), x_t - x^{\ast} \right> \leq \left<\nabla f(x_t), x_t -w_t\right>  .$$
If iteration $t-1$ is a gap step, it holds 

\begin{equation}\label{primal_gap_bound}
f(x_t) - f(x^{\ast}) \leq \left<\nabla f(x_t), x_t -w_t \right> \leq  \frac{2 \Phi_t}{J} \leq  2 \Phi_t .
\end{equation}

We note that (\ref{primal_gap_bound}) also holds at $t=0$ by the definition of $\Phi_0$. By (\ref{primal_gap_bound}), if $2\Phi_t \leq \epsilon$ holds for $\epsilon > 0$,  the primal gap $f(x_t) - f(x^{\ast})$ is upper bounded by $\epsilon$.  Therefore, the total number of epochs $N_{\Phi}$  to achieve $f(x_t) - f(x^{\ast}) \leq \epsilon$ is bounded in the following way:
\begin{equation}\label{num_epochs}
 N_{\Phi}  \leq \left\lceil \log \frac{2\Phi_0}{\epsilon}  \right\rceil .
 \end{equation}

Next, we consider the epoch that starts from iteration $t$ and use the notation $u$ to index the iterations within the epoch. We note  that $\Phi_t = \Phi_u$ within the epoch. 

We divide each iteration into three cases according to types of steps. First, consider the case $u$ iteration is a FW step that means $d_u = x_u - w_u$. Using  Lemma~\ref{thm:not_drop_step} and the condition $\left<\nabla f(x_u), x_u -w_u \right> \geq  \Phi_u /J = \Phi_t /J$, we have 
\begin{align}
f(x_u) - f(x_{u+1}) &\geq \min\left\{ \frac{\left< \nabla f(x_u),x_u - w_u \right> ^2}{2LD^2}, \frac{1}{2}\left< \nabla f(x_u),x_u - w_u \right>  \right\} \notag\\
&\geq \frac{\Phi_t}{2J} \min\left\{ 1, \frac{\Phi_t}{LD^2 J} .\right\} . \label{fw_step_bound}
\end{align}

Next, consider the case $u$ iteration is a descent step that means $d_u = a_u - s_u$ and $\lambda_u < \Lambda_u ^\ast$. Using Lemma~\ref{thm:not_drop_step} (a) and the inequality $\left<\nabla f(x_u), a_u - s_u \right> \geq  \Phi_u = \Phi_t$, we have 

\begin{align}
f(x_u) - f(x_{u+1}) &\geq \frac{\left< \nabla f(x_u), d_u\right> ^2}{2LD^2} \notag\\
&\geq \frac{\Phi_t ^2}{2LD^2} \label{des_step_bound}.
\end{align}

Finally, consider the case $u$ iteration is a drop step that means $d_u = a_u - s_u$ and $\lambda_u = \Lambda_u ^\ast$. In this case, it holds
\begin{equation}\label{drop_step_bound}
f(x_u) - f(x_{u+1}) \geq 0 .
\end{equation}

We bound the total number of iterations $T$ to achieve $f(x_T)-f(x^{\ast}) \leq \epsilon$. Let $N_{\mathrm{FW}}, N_{\mathrm{desc}}, N_{\mathrm{drop}}, N_{\mathrm{gap}}$ be the number of FW steps, descent steps, drop steps and gap steps respectively. In addition, we denote the number of FW steps, descent steps in epoch $t$ by $N_{\mathrm{FW}} ^t, N_{\mathrm{desc}} ^t $  respectively. Using $N_{\mathrm{drop}} \leq N_{\mathrm{FW}}$ and (\ref{num_epochs}), we have

\begin{equation}\label{iteration_bound}
T \leq N_{\mathrm{FW}}+ N_{\mathrm{desc}}+N_{\mathrm{drop}}+ N_{\mathrm{gap}} \leq \left\lceil \log \frac{2\Phi_0}{\epsilon}  \right\rceil + 2N_{\mathrm{FW}}+ N_{\mathrm{desc}}\leq \left\lceil \log \frac{2\Phi_0}{\epsilon}  \right\rceil + \sum_{t : \mathrm{epoch}} (2N_{\mathrm{FW}} ^t + N_{\mathrm{desc}} ^t)  .
\end{equation}
Here, we bound $2N_{\mathrm{FW}} ^t + N_{\mathrm{desc}} ^t$ in epoch $t$.

 Let $t'$ be the index of iteration where epoch $t$ starts. We consider the following two cases: 

\begin{description}
\item[(I) $\Phi_t \geq LD^2 J$]
\ \\
By (\ref{primal_gap_bound}),(\ref{fw_step_bound}), (\ref{des_step_bound}) and the condition $\Phi_t \geq LD^2 J$, we have 
\begin{align}
2\Phi_t &\geq f(x_{t'}) - f(x^{\ast})  \notag \\
&\geq  N_{\mathrm{FW}} ^t \cdot \frac{\Phi_t}{2J} +  N_{\mathrm{desc}} ^t \cdot \frac{\Phi_t ^2}{2LD^2} \notag \\
&\geq 2 N_{\mathrm{FW}} ^t \cdot \frac{\Phi_t}{4J} + N_{\mathrm{desc}} ^t \cdot \frac{\Phi_t}{2LD^2} \cdot LD^2J \notag\\
&= \Phi_t \left(2 N_{\mathrm{FW}} ^t \cdot \frac{1}{4J} +  N_{\mathrm{desc}} ^t \cdot \frac{J}{2}  \right)\notag .
\end{align}
Thus, 
\begin{equation}\label{iteration_bound_A}
2 N_{\mathrm{FW}} ^t + N_{\mathrm{desc}} ^t \leq \max\{ 8J, \frac{4}{J} \}.
\end{equation}

\item[(II) $\Phi_t <  LD^2 J$]
\ \\
Using (\ref{primal_gap_bound}),(\ref{fw_step_bound}) and (\ref{des_step_bound}), we have 

$$2\Phi_t \geq f(x_{t'}) - f(x^{\ast}) \geq N_{\mathrm{FW}} ^t \cdot \frac{\Phi_t ^2}{2LD^2 J^2}  + N_{\mathrm{desc}} ^t \cdot \frac{\Phi_t ^2}{2LD^2} = \Phi_t ^2 \left( 2 N_{\mathrm{FW}} ^t \cdot \frac{1}{4LD^2J^2} + N_{\mathrm{desc}} ^t \cdot \frac{1}{2LD^2} \right).$$
Thus, we have 
\begin{equation}\label{non_sc_iter_bound_B}
2 N_{\mathrm{FW}} ^t + N_{\mathrm{desc}} ^t  \leq \frac{1}{\Phi_t} \max\{8LD^2J^2, 4LD^2 \}.
\end{equation}
\end{description}
Since $\Phi_t$ dose not increase and (\ref{iteration_bound_A}) holds, the number of iterations in which $\Phi_t \geq LD^2 J$ holds is bounded. We define $t_1$ as the first epoch where $\Phi_t < LD^2 J$ is satisfied and $T_1$ as the total number of the iterations by epoch $t_1$. Then, by (\ref{iteration_bound}) and (\ref{non_sc_iter_bound_B}), we have
$$
T \leq T_1 + \left\lceil \log \frac{2\Phi_0}{\epsilon}  \right\rceil + \sum_{t\geq t_1} (2N_{\mathrm{FW}} ^t + N_{\mathrm{desc}} ^t) 
\leq T_1 + \left\lceil \log \frac{2\Phi_0}{\epsilon}  \right\rceil + \sum_{t : \mathrm{epoch}} \frac{C_1}{\Phi_t} 
,$$
where $C_1=\max\{8LD^2J^2, 4LD^2 \} $.
In addition, it holds that
$$
 \sum_{t : \mathrm{epoch}} \frac{C_1}{\Phi_t} \leq \frac{C_1}{\Phi_0} \sum_{n=0}^{N_{\Phi}} 2^n \leq \frac{C_1}{\Phi_0} (2^{N_{\Phi}+1} -1) \leq \frac{C_1}{\Phi_0} 2^{\log \frac{2\Phi_0}{\epsilon}+2 }
 = \frac{8C_1}{\epsilon}.
$$
Therefore , we have
$$ T \leq T_1 +\left\lceil \log \frac{2\Phi_0}{\epsilon}  \right\rceil + \frac{8C_1}{\epsilon}.$$
Therefore we drive $\epsilon= O(\frac{1}{T})$.

Next, consider Case (A). The argument by (\ref{iteration_bound}) can be done exactly the same as Case (B).  In the same way as Case (B), we consider the two cases (I) $\Phi_t \geq LD^2 J$ and (II) $\Phi_t <  LD^2 J$. For the case (I), we can derive just the same result (\ref{iteration_bound_A}). For the case (II), we analyze in a different way from Case (B) in the following way:

\begin{description}
\item[(II-A) $\Phi_t <  LD^2 J$]
\ \\ 
If iteration $t-1$ is a gap step, using the same argument as \cite{pok18bcg}, we have 
\begin{equation}\label{sc_primal_gap}
f(x_t) - f(x^{\ast}) \leq \frac{8\Phi_t ^2}{\mu}.
\end{equation}
Using (\ref{sc_primal_gap}), we drive the following bound for Case (A);
\begin{align*}
\frac{8\Phi_t ^2}{\mu} & \geq f(x_{t'}) - f(x^{\ast}) \\
&\geq N_{\mathrm{FW}} ^t \cdot \frac{\Phi_t ^2}{2LD^2 J^2} + N_{\mathrm{desc}} ^t \cdot \frac{\Phi_t ^2}{2LD^2} \\
&\geq\Phi_t ^2 \left(2 N_{\mathrm{FW}} ^t \cdot \frac{1}{4LD^2J^2} + N_{\mathrm{desc}} ^t \cdot \frac{1}{2LD^2} \right) .
\end{align*}
Therefore, we have 
\begin{equation}\label{sc_iter_bound_A}
2 N_{\mathrm{FW}} ^t + N_{\mathrm{desc}} ^t  \leq \frac{8}{\mu} \max\{ 4LD^2J^2, 2LD^2 \}. 
\end{equation}
\end{description}

By (\ref{iteration_bound}), (\ref{iteration_bound_A}) and (\ref{sc_iter_bound_A}), it holds
\begin{equation}
T \leq C_2 \left\lceil \log \frac{2\Phi_0}{\epsilon}  \right\rceil,
\end{equation}
where $C_2 = 1+\max\{ \max\{ 8J, \frac{4}{J}\},\frac{8}{\mu} \max\{ 4LD^2J^2, 2LD^2 \} \} $. Thus, we derived the desired result for Case (A).
\end{proof}

As well as Theorem~\ref{thm:only_Lsmooth_Tinv_convergence}, it is easily to see from the proof that the convergence guarantee of Case (B) in Theorem~\ref{thm:lazifiedBPCG} can be also applicable to infinite-dimensional general convex feasible domains.

\subsection{Finer sparsity control in BPCG}
\label{sec:sparsityControl}

In this section we explain how the sparsity of the BPCG algorithm can be further controlled while changing the convergence rate only by a small constant factor. To this end we modify the step selection condition in Line~\ref{eq:select} in Algorithm~\ref{alg:BPCG} to incorporate a scaling factor $K_{\mathrm{sc}} \geq 1.0$: 

\begin{equation}
K_{\mathrm{sc}} \cdot \langle \nabla f(x_t), a_t - s_t \rangle \geq \langle \nabla f(x_t), x_t - w_t \rangle.
\end{equation}

With this modification Lemma~\autoref{thm:2_nabla_f_at_wt} changes as follows

\begin{lem}[Modified version of Lemma~\autoref{thm:2_nabla_f_at_wt}]
For each step $t$ in Algorithm~\ref{alg:BPCG}, an inequality 
$$
(K_{\mathrm{sc}}+1) \cdot \langle \nabla f(x_t), d_t \rangle \geq \langle \nabla f(x_t), a_t - w_t \rangle
$$
holds.
\begin{proof}
If we take a pairwise step we have 
$$
K_{\mathrm{sc}} \cdot \langle \nabla f(x_t), a_t - s_t \rangle \geq \langle \nabla f(x_t), x_t - w_t \rangle.
$$
Moreover, it holds as before 
\begin{equation}
\label{eq:convexComb}
\langle \nabla f(x_t), x_t \rangle \geq \langle \nabla f(x_t), s_t\rangle,
\end{equation}
so that we obtain
$$
K_{\mathrm{sc}} \cdot \langle \nabla f(x_t), a_t - s_t \rangle \geq \langle \nabla f(x_t), s_t - w_t \rangle.
$$
Now we add $\langle \nabla f(x_t), a_t - s_t \rangle$ to both sides of this inequality and obtain:
$$
(K_{\mathrm{sc}}+1) \cdot \langle \nabla f(x_t), a_t - s_t \rangle \geq \langle \nabla f(x_t), a_t - w_t \rangle,
$$
which is the claim in this case as $d_t = a_t - s_t$.

In case we took a normal FW step it holds:
$$
K_{\mathrm{sc}} \cdot \langle \nabla f(x_t), a_t - s_t \rangle < \langle \nabla f(x_t), x_t - w_t \rangle,
$$
and together with \eqref{eq:convexComb}
$$
K_{\mathrm{sc}} \cdot \langle \nabla f(x_t), a_t - x_t \rangle < \langle \nabla f(x_t), x_t - w_t \rangle.
$$
Now adding $K_{\mathrm{sc}} \cdot \langle \nabla f(x_t), x_t - w_t \rangle$ to both sides we obtain
$$
K_{\mathrm{sc}} \cdot \langle \nabla f(x_t), a_t - w_t \rangle < (K_{\mathrm{sc}}+1) \cdot \langle \nabla f(x_t), x_t - w_t \rangle,
$$
and hence 
$$
\langle \nabla f(x_t), a_t - w_t \rangle < \frac{(K_{\mathrm{sc}}+1)}{K_{\mathrm{sc}}} \cdot \langle \nabla f(x_t), x_t - w_t \rangle \leq (K_{\mathrm{sc}}+1) \cdot \langle \nabla f(x_t), x_t - w_t \rangle,
$$
as required as $d_t = x_t - w_t$.
\end{proof}
\end{lem}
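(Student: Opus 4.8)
The plan is to mirror the proof of the original Lemma~\ref{thm:2_nabla_f_at_wt}, which is exactly the special case $K_{\mathrm{sc}} = 1$, and to split into the two branches selected by the modified criterion: the pairwise step (where $d_t = a_t - s_t$) and the FW step (where $d_t = x_t - w_t$). In both branches the only structural facts I expect to need are the modified selection inequality itself and the elementary bound $\langle \nabla f(x_t), x_t\rangle \geq \langle \nabla f(x_t), s_t\rangle$, which holds because $s_t$ minimizes the linearized objective over $S_t$ while $x_t$ is a convex combination of atoms of $S_t$; this is the analogue of inequality~\eqref{eq:nabla_x_t_vs_s_t}.

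First I would treat the pairwise branch. Here the modified criterion reads $K_{\mathrm{sc}} \langle \nabla f(x_t), a_t - s_t\rangle \geq \langle \nabla f(x_t), x_t - w_t\rangle$. Combining it with $\langle \nabla f(x_t), x_t\rangle \geq \langle \nabla f(x_t), s_t\rangle$ lets me replace $x_t$ by $s_t$ on the right-hand side to obtain $K_{\mathrm{sc}}\langle \nabla f(x_t), a_t - s_t\rangle \geq \langle \nabla f(x_t), s_t - w_t\rangle$. Adding $\langle \nabla f(x_t), a_t - s_t\rangle$ to both sides then telescopes the right-hand side into $a_t - w_t$ and yields $(K_{\mathrm{sc}}+1)\langle \nabla f(x_t), a_t - s_t\rangle \geq \langle \nabla f(x_t), a_t - w_t\rangle$, which is the claim since $d_t = a_t - s_t$.

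Next I would treat the FW branch, where the criterion fails: $K_{\mathrm{sc}}\langle \nabla f(x_t), a_t - s_t\rangle < \langle \nabla f(x_t), x_t - w_t\rangle$. Using $\langle \nabla f(x_t), x_t\rangle \geq \langle \nabla f(x_t), s_t\rangle$ in the form $\langle \nabla f(x_t), a_t - s_t\rangle \geq \langle \nabla f(x_t), a_t - x_t\rangle$ gives $K_{\mathrm{sc}}\langle \nabla f(x_t), a_t - x_t\rangle < \langle \nabla f(x_t), x_t - w_t\rangle$. Adding $K_{\mathrm{sc}}\langle \nabla f(x_t), x_t - w_t\rangle$ to both sides collapses the left into $K_{\mathrm{sc}}\langle \nabla f(x_t), a_t - w_t\rangle$ and the right into $(K_{\mathrm{sc}}+1)\langle \nabla f(x_t), x_t - w_t\rangle$, so that after dividing by $K_{\mathrm{sc}}$ the factor $(K_{\mathrm{sc}}+1)/K_{\mathrm{sc}}$ appears.

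The one point requiring care, and the reason the stated constant comes out uniform across both branches, is precisely this final bookkeeping in the FW branch: the division by $K_{\mathrm{sc}}$ produces the sharper factor $(K_{\mathrm{sc}}+1)/K_{\mathrm{sc}}$, and I must invoke the standing hypothesis $K_{\mathrm{sc}} \geq 1$ to relax $(K_{\mathrm{sc}}+1)/K_{\mathrm{sc}} \leq K_{\mathrm{sc}}+1$ so that both cases deliver the common factor $(K_{\mathrm{sc}}+1)$ against $d_t = x_t - w_t$. Beyond matching the two cases to this shared constant, everything is linear bookkeeping, and I anticipate no genuine obstacle.
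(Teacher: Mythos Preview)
Your proposal is correct and follows essentially the same approach as the paper's proof: both split into the pairwise and FW branches, use the selection inequality together with $\langle \nabla f(x_t), x_t\rangle \geq \langle \nabla f(x_t), s_t\rangle$, and perform the same additions to telescope the terms, including the final division by $K_{\mathrm{sc}}$ and the relaxation $(K_{\mathrm{sc}}+1)/K_{\mathrm{sc}} \leq K_{\mathrm{sc}}+1$ via $K_{\mathrm{sc}}\geq 1$.
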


Plugging this modified lemma back into the rest of the proof of Theorem \ref{thm:mu_strong_linear_convergence} leads to a small constant factor loss of $(K_{\mathrm{sc}}+1)^2 / 4$ in the convergence rate, which stems from the simple fact that progress from smoothness is proportional to the squared dual gap estimate, which weakened by a factor of $(K_{\mathrm{sc}}+1)/2$. 

\begin{rem}[Sparsity BPCG vs. lazy BPCG]
Observe that the non-lazy BPCG is sparser than the lazy BPCG. While counter-intuitive, the optimization for the local active set maximizes sparsity already and the non-lazy variant uses tighter $\Phi$ bounds as they are updated in each iteration promoting prolonged optimization over active set before adding new vertices. Although it can happen that the lazy BPCG gets sparser solutions than the non-lazy BPCG, this observation may help to understand  the behavior of the algorithms. 
\end{rem}

\section{Application to kernel quadrature}

Kernel herding is a well-known method for kernel quadrature and 
can be regarded as a Frank-Wolfe method on a reproducing kernel Hilbert space (RKHS). In the following we will use the BPCG algorithm for kernel herding with aim to exploit its sparsity. To describe this application, 
we consider the following notation for kernel quadrature on an RHKS. 
Let $\Omega \subset \mathbf{R}^{d}$ be a compact region and 
let a continuous function $K: \Omega \times \Omega \to \mathbf{R}$ be a symmetric positive-definite kernel. 
Then, 
the RKHS $\mathcal{H}_{K}(\Omega)$ for $K$ on $\Omega$ is uniquely determined. We denote the inner product in $\mathcal{H}_{K}(\Omega)$ by $\left<\cdot, \cdot \right>_K$.
We consider integration of a function $f \in \mathcal{H}_{K}(\Omega)$ 
with respect to a probability measure on $\Omega$. 
By letting $\mathscr{M}^{+}(1)$ be a set of probability measures on $\Omega$, 
we consider
\(
I = \int_{\Omega} f(x) d\mu(x)
\)
for $\mu \in \mathscr{M}^{+}(1)$. 
To approximate $I$, 
we consider a quadrature formula 
$I_{X, W} = \sum_{x \in X} w_{x} \, f(x)$ 
by taking finite sets $X \subset \Omega$ and $W = \{ w_{x} \}_{x \in X} \subset [0,1]$ with $\sum_{x \in X} w_{x} = 1$. 
The formula $I_{X}$ can be regarded as an integral of $f$ 
by the discrete probability measure $\xi = \sum_{x \in X} w_{x} \, \delta_{x} \in \mathscr{M}^{+}(1)$, 
where $\delta_{x}$ is the Dirac measure for $x \in X$. 
According to the reproducing property of the RKHS, 
the Dirac measure $\delta_{x}$ corresponds to the function $K(x, \cdot) \in \mathcal{H}_{K}(\Omega)$. 
The accuracy of the formula $I_{X, W}$ is estimated by 
the maximum mean discrepancy (MMD) 
$\gamma_{K}(\xi, \mu) := \mathscr{E}_{K}(\xi - \mu)^{1/2}$, where
\begin{align}
\mathscr{E}_{K}(\eta) := \int_{\Omega} \int_{\Omega} K(x, y) \, \mathrm{d}\eta(x) \, \mathrm{d}\eta(y)
\notag
\end{align}
is the $K$-energy of a signed measure $\eta$. 

To construct a good discrete measure $\xi$, 
we consider minimization of the squared MMD $F_{K, \mu}(\nu) := \gamma_{K}(\nu, \mu)^{2}$ as a function of $\nu \in \mathscr{M}^{+}(1)$.
Now, kernel herding is essentially running a Frank-Wolfe method to minimize $F_{K, \mu}$ over $\mathscr{M}^{+}(1)$, in which we start with a Dirac measure $\xi = \delta_{x_{0}}$ and 
sequentially add new Dirac measures $\delta_{x_{1}}, \delta_{x_{2}}, \ldots$ and form iterates as convex combinations. 
As such, we can naturally apply BPCG to the kernel herding. Here we note that 
the function $F_{K, \mu}$ is a $2$-smooth and convex function on $\mathscr{M}^{+}(1)$. 
In particular, we have
\begin{align}
& F_{K, \mu}(\theta + \alpha (\eta - \zeta)) 
\notag \\
& = 
F_{K, \mu}(\theta) + \alpha \left< \nabla F_{K, \mu}(\theta) , \, \eta - \zeta \right>_{K} + \alpha^{2} \, \mathscr{E}_{K}(\eta - \zeta)
\label{eq:f_2_smooth}
\end{align}
for any $\alpha \in \mathbf{R}$ and $\theta, \eta, \zeta \in \mathscr{M}^{+}(1)$ 
such that $\theta + \alpha (\eta - \zeta) \in \mathscr{M}^{+}(1)$. 
In addition, 
if $K(x,x) = 1$ and $K(x,y) \geq 0$ for any $x,y \in \Omega$, 
we have $\mathscr{E}_{K}(\eta - \zeta) \leq 2$ for any $\eta, \zeta \in \mathscr{M}^{+}(1)$. 
Therefore the term $\mathscr{E}_{K}(\eta - \zeta)$ is bounded in \eqref{eq:f_2_smooth}. 
This fact means that the diameter of $\mathscr{M}^{+}(1)$ with respect to $\mathscr{E}_{K}$ is bounded.

We note that in this problem setting,  the function $\int_{\Omega} K(x, \cdot) \ \mathrm{d}\mu(x)$ which is the embedding of $\mu$ to $\mathcal{H}_K$ satisfies  $\int_{\Omega} K(x, \cdot) \ \mathrm{d}\mu(x)  \in \overline{\mathrm{conv}(\{ K(x, \cdot) \mid x \in \Omega \}) } $, where the closure is taken with respect to the norm of $\mathcal{H}_K$ (see \cite{tsuji2021acceleration}).

In Algorithm~\ref{alg:BPCGforKH} we describe the BPCG algorithm applied to kernel herding. In addition, we present its lazified version Algorithm~\ref{alg:LazifiedBPCGforKH}. In these algorithms, we can regard the Dirac measures $\delta_{x}$ (i.e., $K(x, \cdot)$) as vertices or atoms in the RKHS $\mathcal{H}_{K}(\Omega)$. We assume the existence of the solution in line \autoref{KH_FW_LMO} in Algorithm~\ref{alg:BPCGforKH}. 

\setcounter{algorithm}{2}
\begin{algorithm}[ht]
\caption{BPCG algorithm for kernel herding}
\label{alg:BPCGforKH}
\begin{algorithmic}[1]
\REQUIRE the function $F_{K, \mu}$, \ start measure $\xi_{0} \in \mathscr{M}^{+}(1)$ with $\mathop{\mathrm{supp}} \xi_{0} = \{ x_{0} \}$
\ENSURE discrete measures $\xi_{1}, \ldots, \xi_{T} \in \mathscr{M}^{+}(1)$
\STATE $X_{0} \leftarrow \mathop{\mathrm{supp}} \xi_{0} $
\FOR {$t = 0$ to $T-1$}
	\STATE $x_{t}^{\mathrm{A}} \leftarrow \argmax_{x \in X_{t}} \, \left< \nabla F_{K, \mu}(\xi_{t}), \,\delta_{x} \right>_{K}$
	\STATE $x_{t}^{\mathrm{S}} \leftarrow \argmin_{x \in X_{t}} \, \left< \nabla F_{K, \mu}(\xi_{t}), \, \delta_{x} \right>_{K}$
	\STATE $x_{t}^{\mathrm{W}} \leftarrow \argmin_{x \in \Omega} \, \left< \nabla F_{K, \mu}(\xi_{t}), \, \delta_{x} \right>_{K}$ 
	\label{KH_FW_LMO}
	\IF {$\left< \nabla F_{K, \mu}(\xi_{t}), \, \delta_{x_{t}^{\mathrm{A}}} - \delta_{x_{t}^{\mathrm{S}}} \right>_{K} \geq \left< \nabla F_{K, \mu}(\xi_{t}), \, \xi_{t} - \delta_{x_{t}^{\mathrm{W}}} \right>_{K} $}
		\STATE $\eta_{t} \leftarrow \delta_{x_{t}^{\mathrm{A}}} - \delta_{x_{t}^{\mathrm{S}}}$
		\STATE $\alpha_{t} \leftarrow \argmin_{\alpha \in [0, \, \xi_{t}(\{ x_{t}^{\mathrm{A}} \})]} F_{K, \mu}( \xi_{t} - \alpha \, \eta_{t})$
		\IF {$\alpha_{t} < \xi_{t}(\{ x_{t}^{\mathrm{A}} \})$}
			\STATE $X_{t+1} \leftarrow X_{t}$ 
		\ELSE
			\STATE $X_{t+1} \leftarrow X_{t} \setminus \{ x_{t}^{\mathrm{A}} \}$
		\ENDIF
	\ELSE
			\STATE $\eta_{t} \leftarrow \xi_{t} - \delta_{x_{t}^{\mathrm{W}}}$
			\STATE $\alpha_{t} \leftarrow \argmin_{\alpha \in [0,1]} F_{K, \mu}( \xi_{t} - \alpha \eta_{t} )$
			\STATE $X_{t+1} \leftarrow X_{t} \cup \{ x_{t}^{\mathrm{W}} \}$ 
	\ENDIF
	\STATE $\xi_{t+1} \leftarrow \xi_{t} - \alpha_{t} \eta_{t} $ 
\ENDFOR
\end{algorithmic}
\end{algorithm}

\setcounter{algorithm}{3}
\begin{algorithm}[H]
\caption{Lazified BPCG algorithm for kernel herding}
\label{alg:LazifiedBPCGforKH}
\begin{algorithmic}[1]
\REQUIRE the function $F_{K, \mu}$, \ start measure $\xi_{0} \in \mathscr{M}^{+}(1)$ with $\mathop{\mathrm{supp}} \xi_{0} = \{ x_{0} \}$, accuracy $J \geq 1$
\ENSURE discrete measures $\xi_{1}, \ldots, \xi_{T} \in \mathscr{M}^{+}(1)$
\STATE $\Phi_{0} \leftarrow \max_{x \in \Omega} \, \left< \nabla F_{K, \mu}(\xi_{0}) , \, \xi_{0} - \delta_{x} \right>_{K}/2$
\STATE $X_{0} \leftarrow \mathop{\mathrm{supp}} \xi_{0} $
\FOR {$t = 0$ to $T-1$}
	\STATE $x_{t}^{\mathrm{A}} \leftarrow \argmax_{x \in X_{t}} \, \left< \nabla F_{K, \mu}(\xi_{t}), \,\delta_{x} \right>_{K}$
	\STATE $x_{t}^{\mathrm{S}} \leftarrow \argmin_{x \in X_{t}} \, \left< \nabla F_{K, \mu}(\xi_{t}), \, \delta_{x} \right>_{K}$
	\IF {$\left< \nabla F_{K, \mu}(\xi_{t}), \, \delta_{x_{t}^{\mathrm{A}}} - \delta_{x_{t}^{\mathrm{S}}} \right>_{K} \geq \Phi_{t}$}
		\STATE $\eta_{t} \leftarrow \delta_{x_{t}^{\mathrm{A}}} - \delta_{x_{t}^{\mathrm{S}}}$
		\STATE $\alpha_{t} \leftarrow \argmin_{\alpha \in [0, \, \xi_{t}(\{ x_{t}^{\mathrm{A}} \})]} F_{K, \mu}( \xi_{t} - \alpha \, \eta_{t})$
		\STATE $\xi_{t+1} \leftarrow \xi_{t} - \alpha_{t} \, \eta_{t} $ 
		\STATE $\Phi_{t+1} \leftarrow \Phi_{t}$
		\IF {$\alpha_{t} < \xi_{t}(\{ x_{t}^{\mathrm{A}} \})$}
			\STATE $X_{t+1} \leftarrow X_{t}$ 
		\ELSE
			\STATE $X_{t+1} \leftarrow X_{t} \setminus \{ x_{t}^{\mathrm{A}} \}$
		\ENDIF
	\ELSE
		\STATE $x_{t}^{\mathrm{W}} \leftarrow \argmin_{x \in \Omega} \, \left< \nabla F_{K, \mu}(\xi_{t}), \, \delta_{x} \right>_{K}$
		\IF  {$\left< \nabla F_{K, \mu}(\xi_{t}), \, \xi_{t} - \delta_{x_{t}^{\mathrm{W}}} \right>_{K} \geq \Phi_{t}/J$}
			\STATE $\eta_{t} \leftarrow \xi_{t} - \delta_{x_{t}^{\mathrm{W}}}$
			\STATE $\alpha_{t} \leftarrow \argmin_{\alpha \in [0,1]} F_{K, \mu}( \xi_{t} - \alpha \eta_{t} )$
			\STATE $\xi_{t+1} \leftarrow \xi_{t} - \alpha_{t} \eta_{t} $ 
			\STATE $\Phi_{t+1} \leftarrow \Phi_{t}$
			\STATE $X_{t+1} \leftarrow X_{t} \cup \{ x_{t}^{\mathrm{W}} \}$ 
		\ELSE
			\STATE $\xi_{t+1} \leftarrow \xi_{t}$
			\STATE $\Phi_{t+1} \leftarrow \Phi_{t}/2$
			\STATE $X_{t+1} \leftarrow X_{t}$
		\ENDIF
	\ENDIF
\ENDFOR
\end{algorithmic}
\end{algorithm}

Recall that the function $F_{K,\mu}$ is $2$-smooth and convex as indicated in~\eqref{eq:f_2_smooth}. 
Then, 
under the assumption that $K(x,x) = 1$ and $K(x,y) \geq 0$ for any $x,y \in \Omega$,
we obtain the following theorem in a similar manner to Theorems~\ref{thm:only_Lsmooth_Tinv_convergence} and~\ref{thm:lazifiedBPCG}. 

\begin{thm}
Suppose that $K(x,x) = 1$ and $K(x,y) \geq 0$ for any $x,y \in \Omega$. 
Let $\{ \xi_{t} \}_{t=0}^{T} \in \mathscr{M}^{+}(1)$ be the discrete probability measure on $\Omega$
given by Algorithms~\ref{alg:BPCGforKH} or Algorithm~\ref{alg:LazifiedBPCGforKH}. 
Then,  we have
\begin{align}
\gamma_{K}(\xi_{T}, \mu)^{2} = O\left( \frac{1}{T} \right)
\quad
(T \to \infty). 
\notag
\end{align}
\end{thm}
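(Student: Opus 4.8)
The plan is to reduce this statement to the general smooth convex guarantees already established, namely Theorem~\ref{thm:only_Lsmooth_Tinv_convergence} for Algorithm~\ref{alg:BPCGforKH} and Case~(B) of Theorem~\ref{thm:lazifiedBPCG} for Algorithm~\ref{alg:LazifiedBPCGforKH}. The objective to minimize is $f = F_{K,\mu}$, the feasible region is $P = \mathscr{M}^{+}(1)$ viewed inside the RKHS via the mean embedding $\nu \mapsto \int_{\Omega} K(x,\cdot)\,\mathrm{d}\nu(x)$, and the atoms are the feature maps $K(x,\cdot)$ corresponding to the Dirac measures $\delta_x$. Since $\gamma_K(\xi_T,\mu)^2 = F_{K,\mu}(\xi_T)$ by definition, it suffices to bound the primal gap $F_{K,\mu}(\xi_T) - F_{K,\mu}(x^{\ast})$ by $O(1/T)$, where $x^{\ast}$ denotes the minimizer of $F_{K,\mu}$ over $P$.

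First I would verify the hypotheses of the two cited theorems. Convexity and $L$-smoothness with $L = 2$ follow directly from the identity~\eqref{eq:f_2_smooth}, which exhibits $F_{K,\mu}$ as a quadratic whose curvature is controlled by $\mathscr{E}_K$. For the diameter, I would use that $\mathscr{E}_K(\eta - \zeta)$ is exactly the squared RKHS norm of the difference of the two embeddings, together with the bound $\mathscr{E}_K(\eta - \zeta) \leq 2$ that holds under the assumptions $K(x,x) = 1$ and $K(x,y) \geq 0$. This gives a finite diameter $D \leq \sqrt{2}$, hence $D^2 \leq 2$.

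Next I would identify the optimal value. Because $\mu \in \mathscr{M}^{+}(1)$ and $\gamma_K(\mu,\mu) = 0$, the minimum of $F_{K,\mu}$ over $P$ equals $0$; the fact that the embedding $\int_{\Omega} K(x,\cdot)\,\mathrm{d}\mu(x)$ lies in $\overline{\mathrm{conv}(\{K(x,\cdot)\mid x \in \Omega\})}$ guarantees that this minimizer $x^{\ast}$ belongs to the feasible region, so that the primal-gap bounds apply with $f(x^{\ast}) = 0$. Substituting $L = 2$ and $D^2 \leq 2$ into Theorem~\ref{thm:only_Lsmooth_Tinv_convergence} then yields
\[
\gamma_K(\xi_T,\mu)^2 = F_{K,\mu}(\xi_T) - 0 \leq \frac{4LD^2}{T} \leq \frac{16}{T} = O\!\left(\frac{1}{T}\right),
\]
and the analogous substitution into Case~(B) of Theorem~\ref{thm:lazifiedBPCG} handles Algorithm~\ref{alg:LazifiedBPCGforKH}.

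The main obstacle is that Theorems~\ref{thm:only_Lsmooth_Tinv_convergence} and~\ref{thm:lazifiedBPCG} were stated for a Euclidean feasible region, whereas kernel herding operates in the infinite-dimensional RKHS $\mathcal{H}_K(\Omega)$. The argument goes through precisely because the per-step progress constant $1/(2LD^2)$ of Lemma~\ref{thm:not_drop_step} is dimension-independent, as stressed in the Remark following Theorem~\ref{thm:only_Lsmooth_Tinv_convergence}; this is exactly where the absence of swap steps matters, since PCG's dimension-dependent swap-step count would break the analysis in infinite dimensions. I would therefore devote most of the write-up to confirming that the proofs of the cited theorems use only Hilbert-space inner-product and norm structure, so that every quantity entering them ($\langle \nabla F_{K,\mu}(\xi_t), \eta_t\rangle$, the energies $\mathscr{E}_K$, and the diameter $D$) remains well-defined and bounded in the RKHS, and then invoke the remarks already made in the text extending both results to infinite-dimensional convex domains.
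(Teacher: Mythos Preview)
Your proposal is correct and matches the paper's approach: the paper does not give a separate proof of this theorem but simply observes, immediately before stating it, that $F_{K,\mu}$ is $2$-smooth and convex by~\eqref{eq:f_2_smooth} and that the diameter of $\mathscr{M}^{+}(1)$ is bounded under the kernel assumptions, so the result follows ``in a similar manner to Theorems~\ref{thm:only_Lsmooth_Tinv_convergence} and~\ref{thm:lazifiedBPCG}.'' Your write-up makes explicit exactly the verifications (smoothness, diameter bound, $f(x^\ast)=0$, dimension-independence of Lemma~\ref{thm:not_drop_step}) that the paper leaves implicit.
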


\section{Numerical experiments}
To demonstrate the practical effectiveness of the BPCG algorithm, we compare the performance of BPCG to other state-of-the-art algorithms. To this end we present first numerical experiments in finite-dimensional spaces and compare the performance to (vanilla) Conditional Gradients, Away-Step Conditional Gradients, and Pairwise Conditional Gradients. We then consider the kernel herding setting and compare BPCG to other kernel quadrature methods. 
 
\subsection{Finite dimensional optimization problems}
We report both general primal-dual convergence behavior in iterations and time as well as consider the sparsity of the iterates. 

\subsubsection*{Probaility simplex}
Let $\Delta(n)$ be a probability simplex, i.e.,
\[ \Delta(n)\coloneqq \left\{x\in \mathbb{R}^n \ \middle|\  \sum_{i=1}^n x_i =1, x_i \geq 0 \ (i=1, \ldots, n) \right\}. \]
We consider the following optimization problem:
\begin{align*}
\min_{x\in \mathbb{R}^n}\  & \| x - x_0  \|_2 ^2  \\
\mathrm{s.t.}\ & x \in \Delta (n),
\end{align*}
where $x_0 \in \Delta(n)$.
 \autoref{simplex1} shows the result for the case $n=200$. In terms of convergence in iterations, BPCG has almost the same performance as the Pairwise variant, however in terms of computational time BPCG has the best performance. In this case the LMO is so cheap that using the lazified BPCG algorithm is not advantageous.
\begin{figure}[h]
 \centering
 \includegraphics[width=\linewidth]{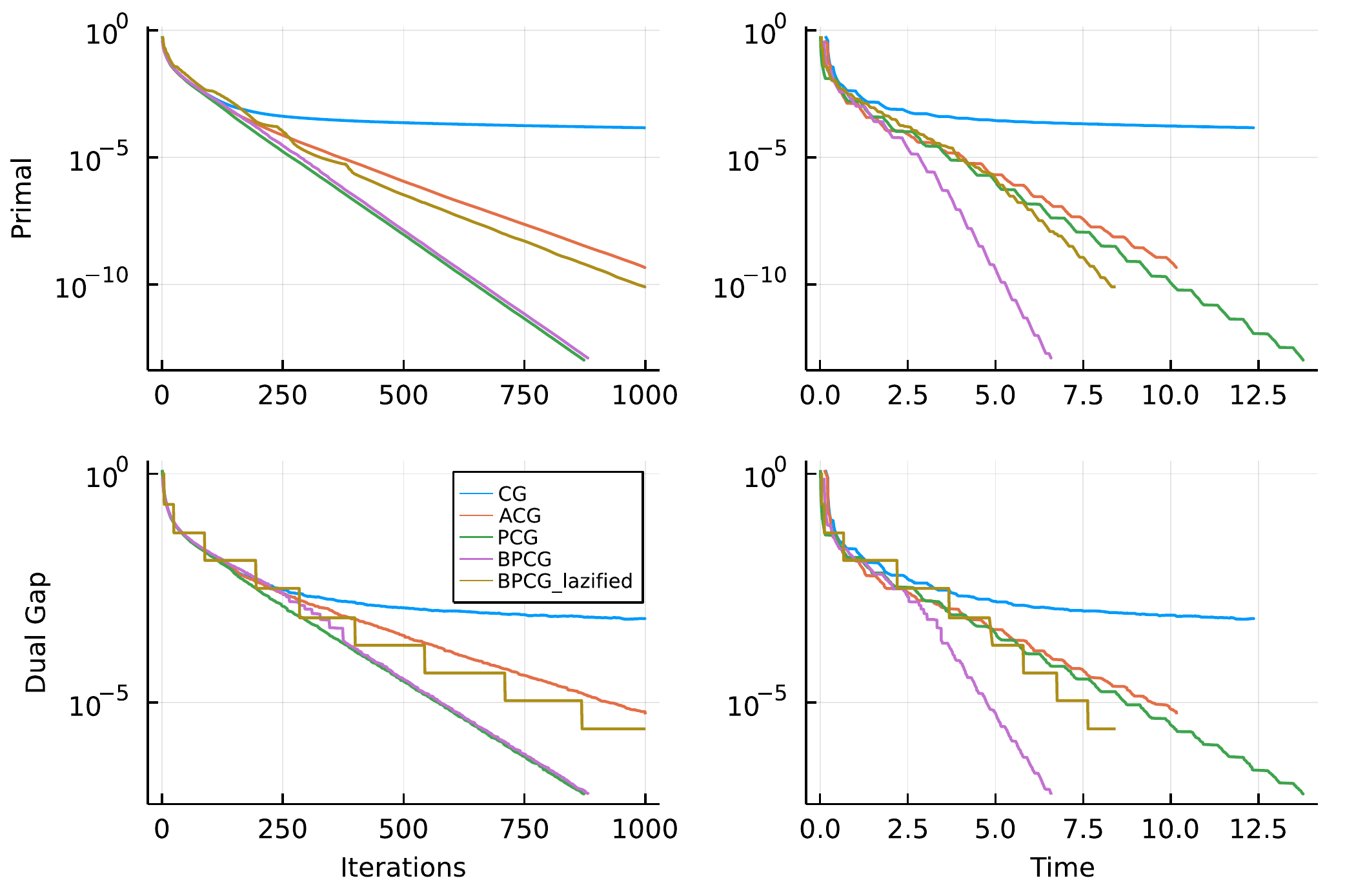}
 \caption{Probability simplex $n=200$}
 \label{simplex1}
 \end{figure}
 In addition, to assess the sparsity of generated solution of each algorithm, we also considered larger setups with $n=500$. The result are shown in  \autoref{simplex2}. It can be observed that both BPCG methods perform slightly (but not much) better than the others with respect to the sparsity, which is expected in this case due to known lower bounds for this type of instance (see \cite{jaggi13fw}).
 
 \begin{figure}[h]
 \centering
  \includegraphics[width=\linewidth]{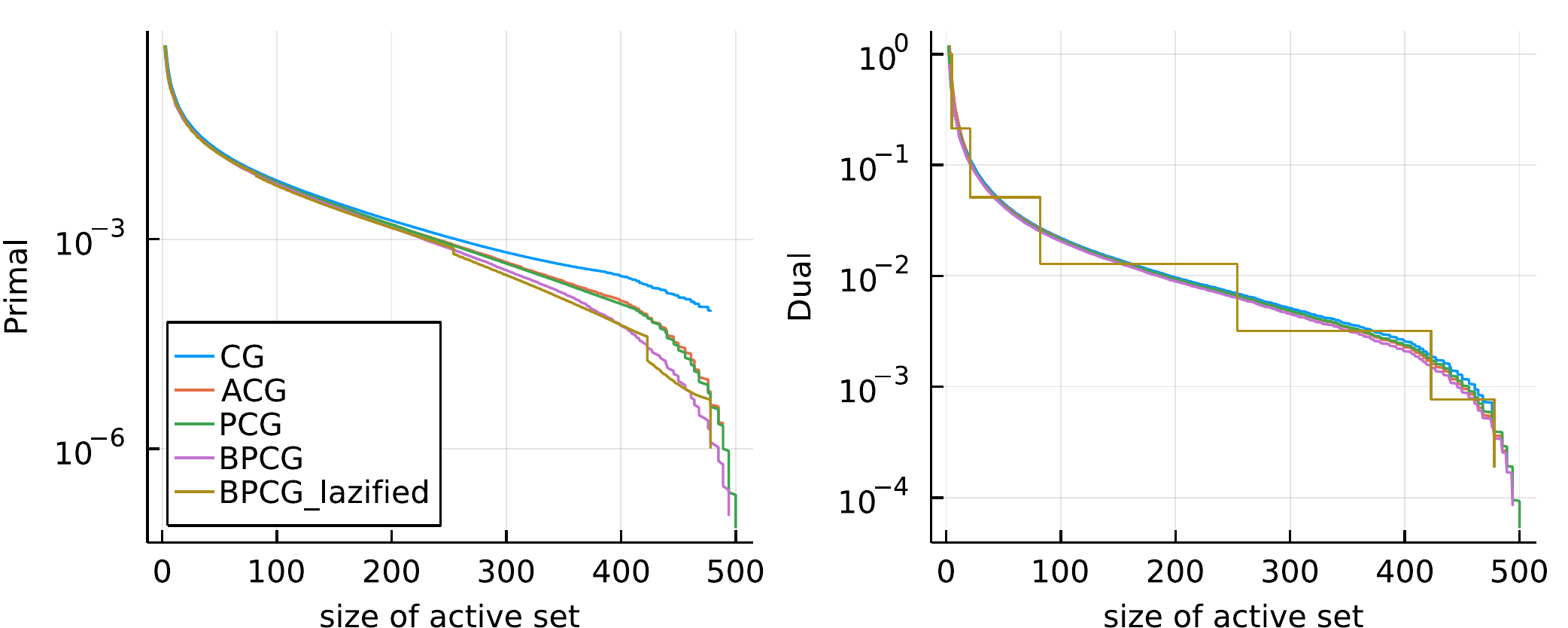}
 \caption{Sparsity: Probability simplex $n=500$}
 \label{simplex2}
\end{figure}

\subsubsection*{Birkhoff polytope}
Let $B(n)$ be the Birkhoff polytope in $\mathbb{R}^{n\times n}$, which is the set of $n\times n$ real values matrices whose entries are all nonnegative and whose rows and columns each add up to $1$.  We consider the following optimization problem 
\begin{align*}
\min_{X\in \mathbb{R}^{n\times n}}\  & \| X - X_0  \|_2 ^2  \\
\mathrm{s.t.}\ & X \in B (n),
\end{align*}
where $X_0 \in \mathbb{R}^{n\times n}$. The result is  \autoref{birkhoff1}:
\begin{figure}[h]
 \centering
 \includegraphics[width=\linewidth]{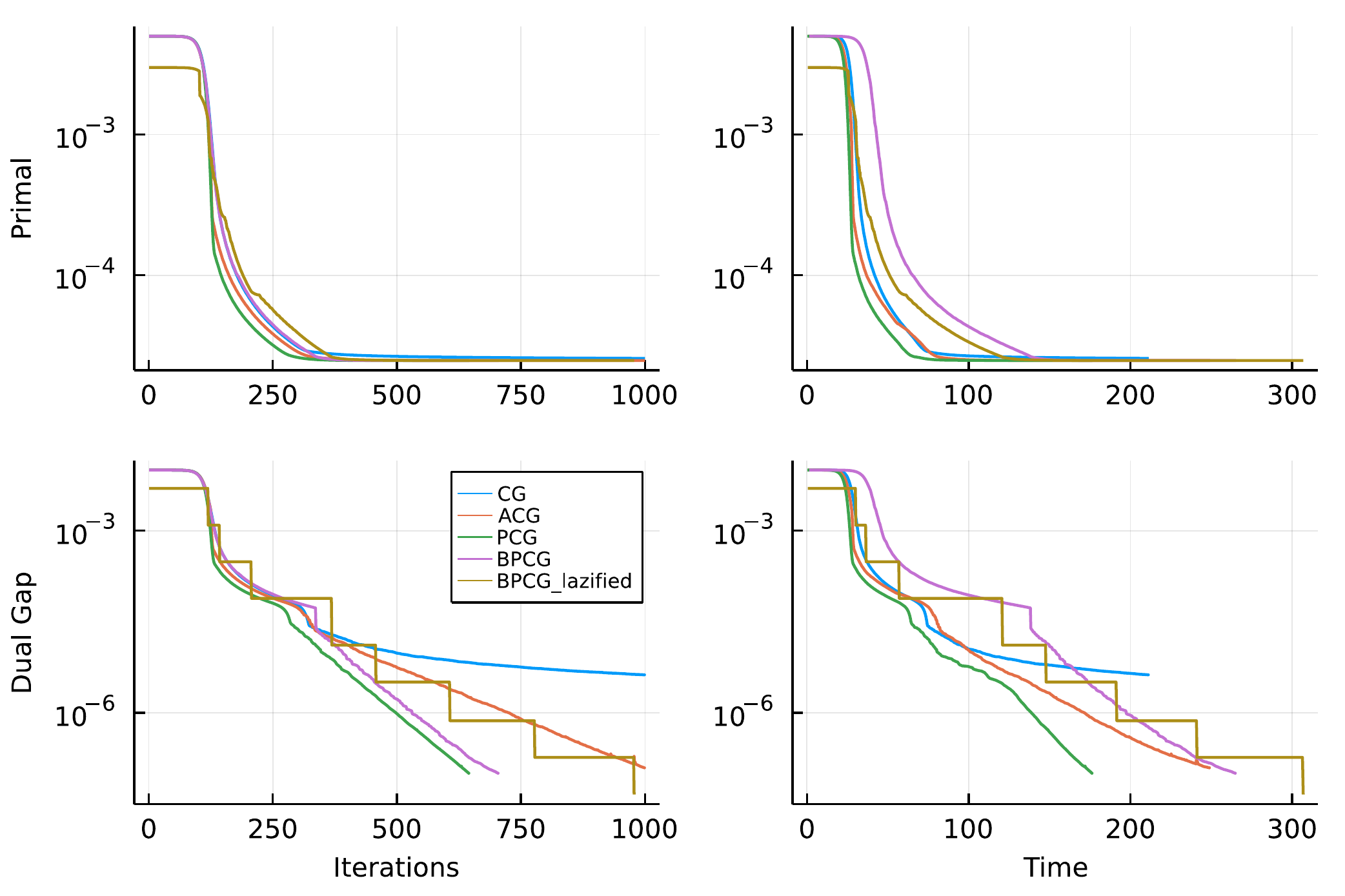}
 \caption{Birkhoff polytope $n=200$}
 \label{birkhoff1}
 \end{figure}
 For the primal value, the five methods have almost same performance. For the dual gap, we see that the Pairwise variant and BPCG are superior to other methods for the number of iterations and for computational time. Here the pairwise variant performed best. 
 
 In addition, we compare solution sparsity in \autoref{birkhoff2}. The two BPCG algorithms perform much better than other methods in terms of the sparsity. 
 
 \begin{figure}[h]
 \centering
  \includegraphics[width=\linewidth]{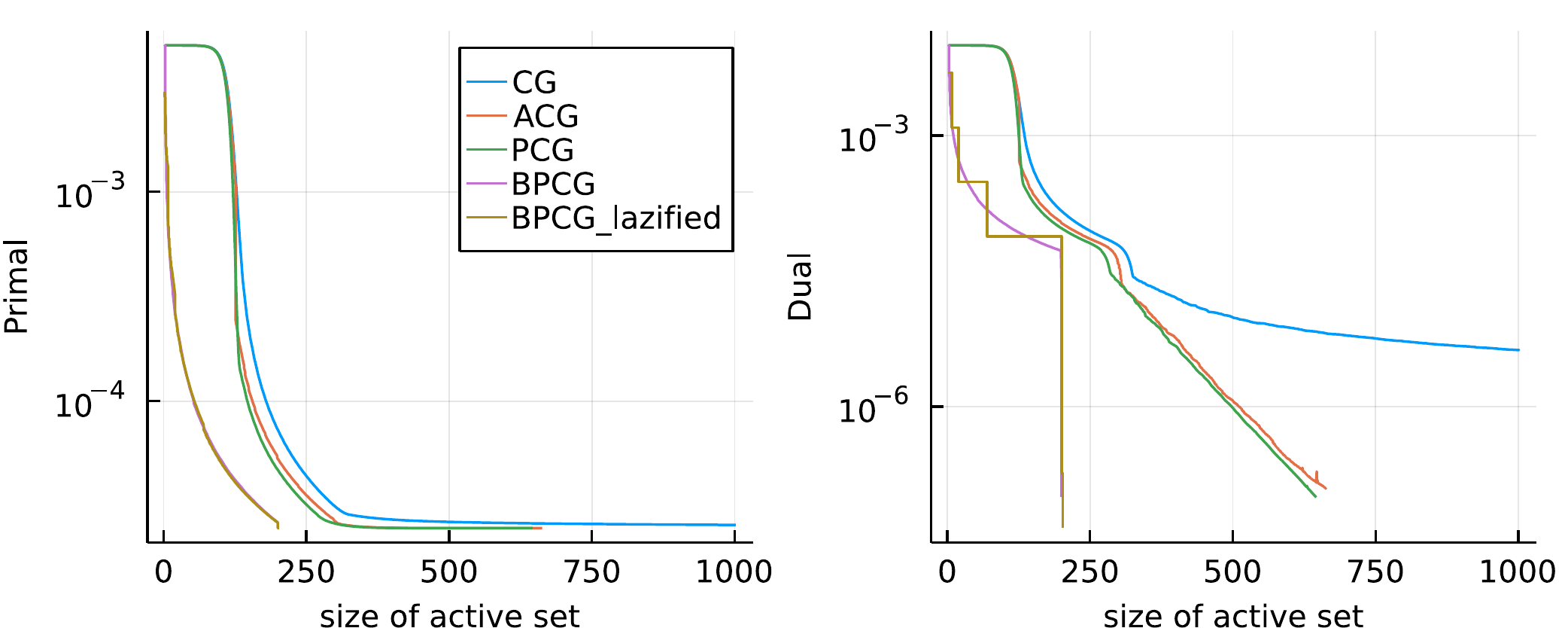}
 \caption{Sparsity: Birkhoff polytope $n=200$}
 \label{birkhoff2}
\end{figure}

\subsubsection*{Matrix completion}
We also considered matrix completion instances over the spectrahedron $S=\{ X \succeq 0 \mid \mathrm{Tr}(X) =1  \} \subset \mathbb{R}^{n\times n}$. The problem is written as  
\[ \min_{X\in S} \sum_{(i, j)\in L} (X_{i, j} - T_{i, j})^2,  \]
where $D= \{ T_{i, j} \mid (i, j) \in L \}$ is an observed data set. We used the data set MovieLens Latest Datasets \url{http://files.grouplens.org/datasets/movielens/ml-latest-small.zip}. The result is given in \autoref{matrix_completion}.
\begin{figure}[h]
 \centering
  \includegraphics[width=\linewidth]{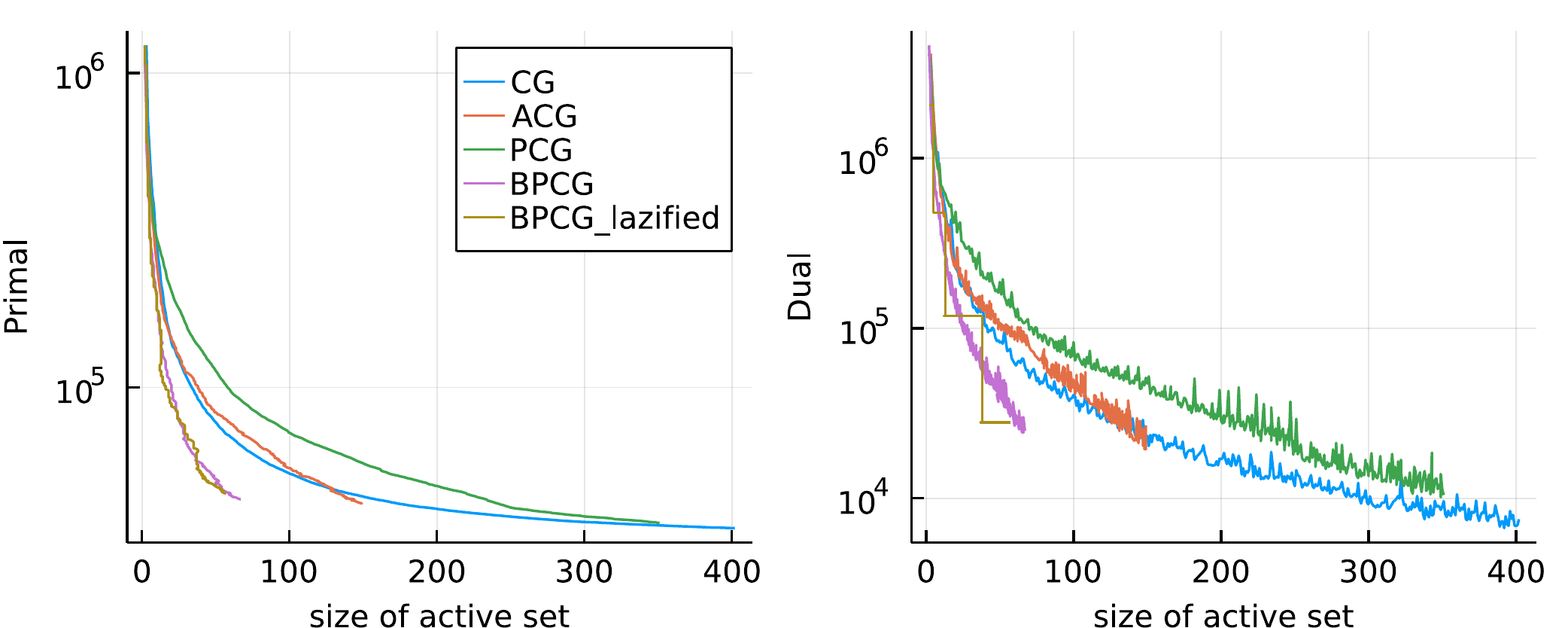}
 \caption{Sparsity: Matrix completion}
 \label{matrix_completion}
\end{figure}
We can observe the effectiveness of BPCG methods in obtaining very sparse solutions.

\subsubsection*{$\ell_p$ norm ball}
We define the $\ell_p$ norm for $p \in \mathbb{R}$ and $x= (x_1, \ldots, x_n) \in \mathbb{R}^n$ as $\| x \|_p = \left( \sum_{i=1}^n | x_i | ^p \right)^{\frac{1}{p}}$ and consider the following optimization problem 
\begin{align*}
\min_{x\in \mathbb{R}^n}\  & \| x - x_0  \|_2 ^2  \\
\mathrm{s.t.}\ & \|x\|_p \leq 1,
\end{align*}
where $\|x_0\|_p \leq 1$. We performed the numerical experiments with $p=5$ and $n=1000$. The result is given in \autoref{l5_ball}. The sparsest solution was obtained via BPCG methods. 
\begin{figure}[h]
 \centering
  \includegraphics[width=\linewidth]{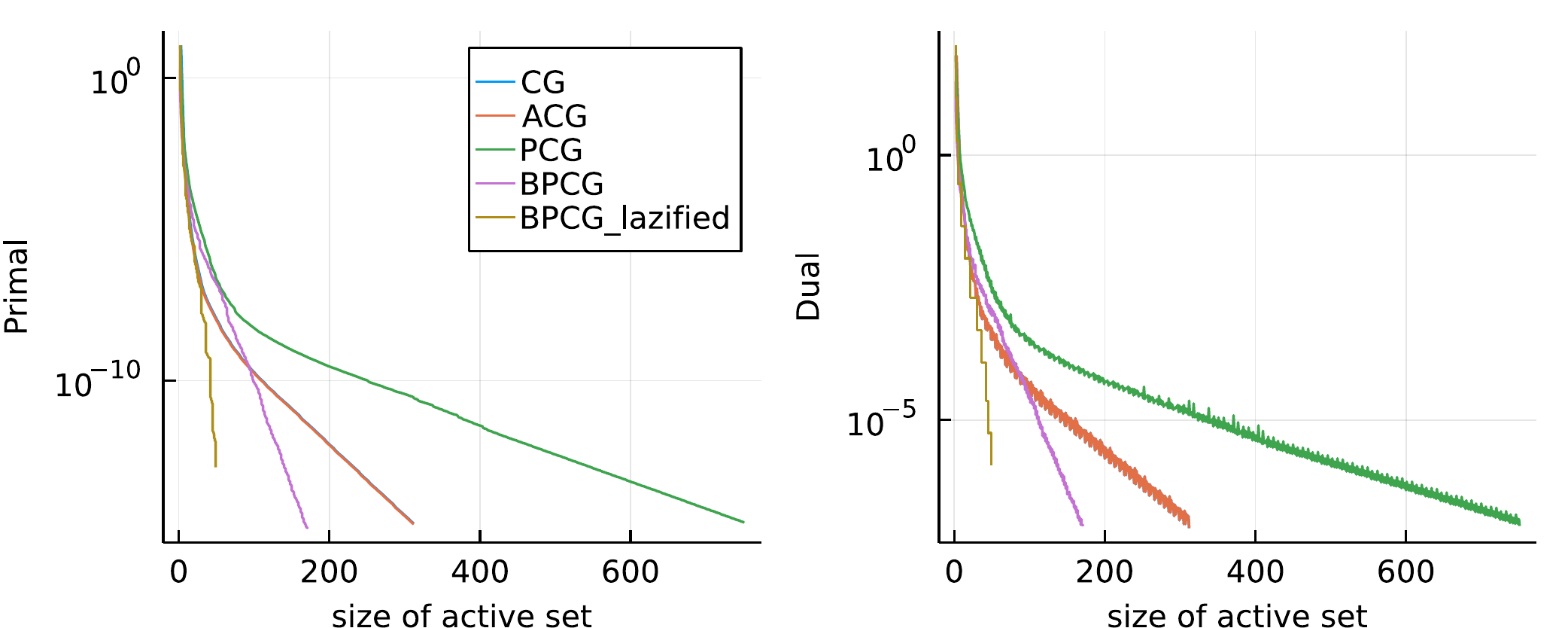}
 \caption{Sparsity: $\ell_5$ norm case}
 \label{l5_ball}
\end{figure}
Note that in this experiment the vanilla Conditional Gradient and Away-step Conditional Gradient behave in exactly the same way and the blue line and orange line in the figure overlap each other. 

\subsection{Kernel herding}
Next, we show the results of the application of BPCG to the kernel herding. We compare the BPCG algorithm and the lazified version with the ordinary kernel herding methods  ``linesearch'' and ``equal-weight'' which correspond to the vanilla Frank Wolfe algorithm whose step size $\alpha_t$ is defined by line search and $\alpha_t = \frac{1}{t+1}$. In addition, we compare BPCG with the Away and Pairwise variants of ``linesearch''; recall that for the latter a theoretical convergence in the infinite-dimensional case is not known. To evaluate the application of BPCG to kernel herding as a quadrature method, we compare it with the popular kernel quadrature method \emph{Sequential Bayesian Quadrature (SBQ)} \citep{10.5555/3020652.3020694} as well as the Monte Carlo method. 

\subsubsection{Mat\'{e}rn kernel case}
We consider the case that the kernel is the Mat\'{e}rn kernel, which has the form
\[ K(x, y)= \frac{2^{1-\nu}}{\Gamma(\nu)} \left( \sqrt{2\nu}\frac{\| x - y \|_2}{\rho} \right)^\nu B_{\nu} \left(\sqrt{2\nu} \frac{ \| x-y \|_2}{\rho} \right), \]
where $B_{\nu}$ is  the modified Bessel function of the second kind and $\rho$ and $\nu$ are positive parameters. 
The Mat\'{e}rn kernel is closely related to Sobolev space and the RKHS $\mathcal{H}_K$ generated by the kernel with smoothness $\nu$ is norm equivalent to the Sobolev space with smoothness $s=\nu + \frac{d}{2}$ (see e.g. \cite{kanagawa2018gaussian,wendland2004scattered}). In addition, the optimal convergence rate of the MMD in Sobolev space is known as $n^{-\frac{s}{d}}$ \citep{novak2006deterministic}. In this section, we use the parameter $(\rho, \nu)= (\sqrt{3}, \frac{3}{2}), (\sqrt{5}, \frac{5}{2})$ since the kernel has explicit forms with these parameters. 

 \begin{figure}[h]
    \begin{tabular}{cc}
      \begin{minipage}[t]{0.5\linewidth}
        \centering
        \includegraphics[keepaspectratio, scale=0.35]{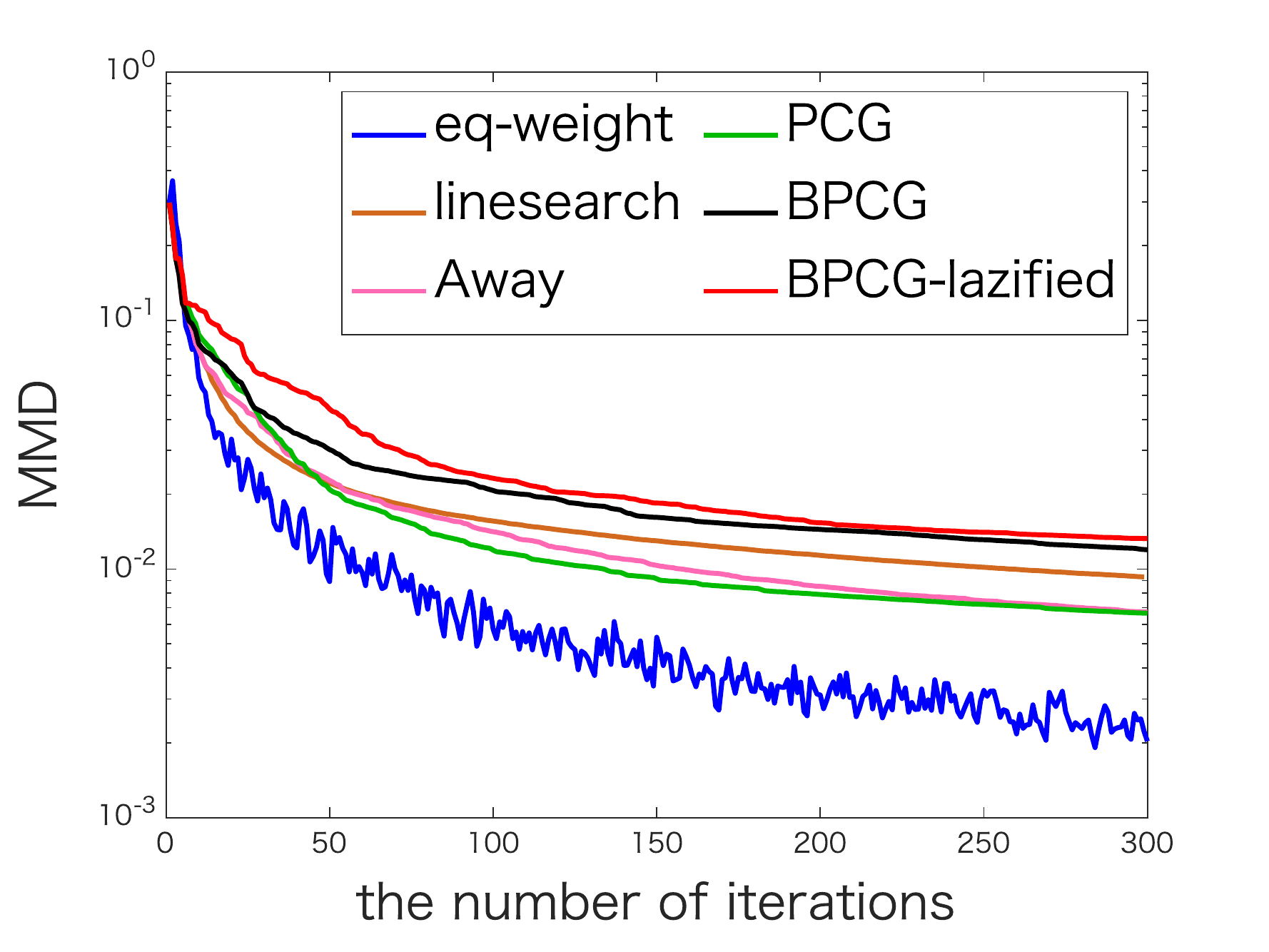}
        \subcaption{MMD for the number of iterations}
      \end{minipage} 
      \begin{minipage}[t]{0.5\linewidth}
        \centering
        \includegraphics[keepaspectratio, scale=0.35]{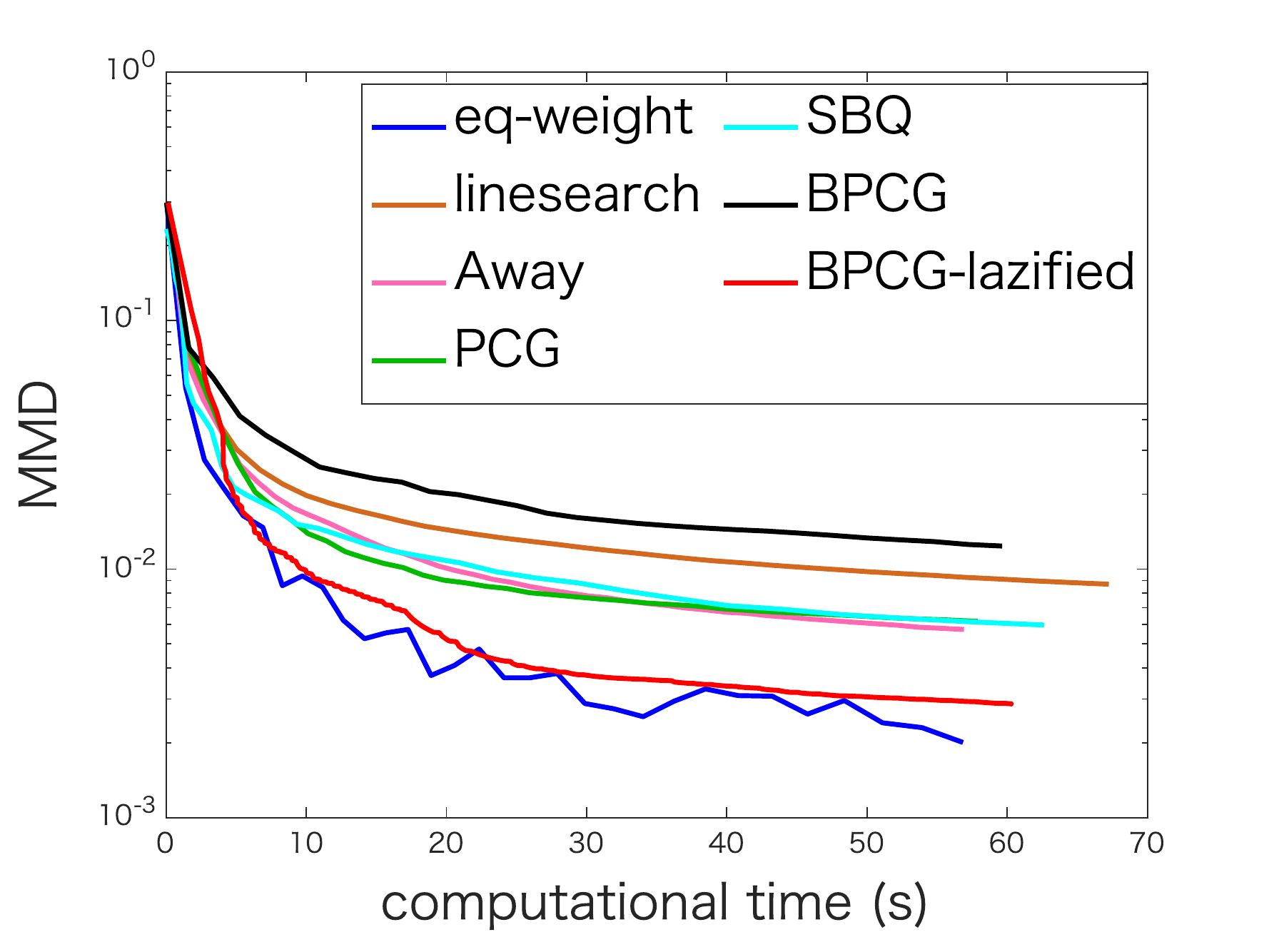}
        \subcaption{MMD for computational time}
      \end{minipage}\\
       \begin{minipage}[t]{0.5\linewidth}
        \centering
        \includegraphics[keepaspectratio, scale=0.35]{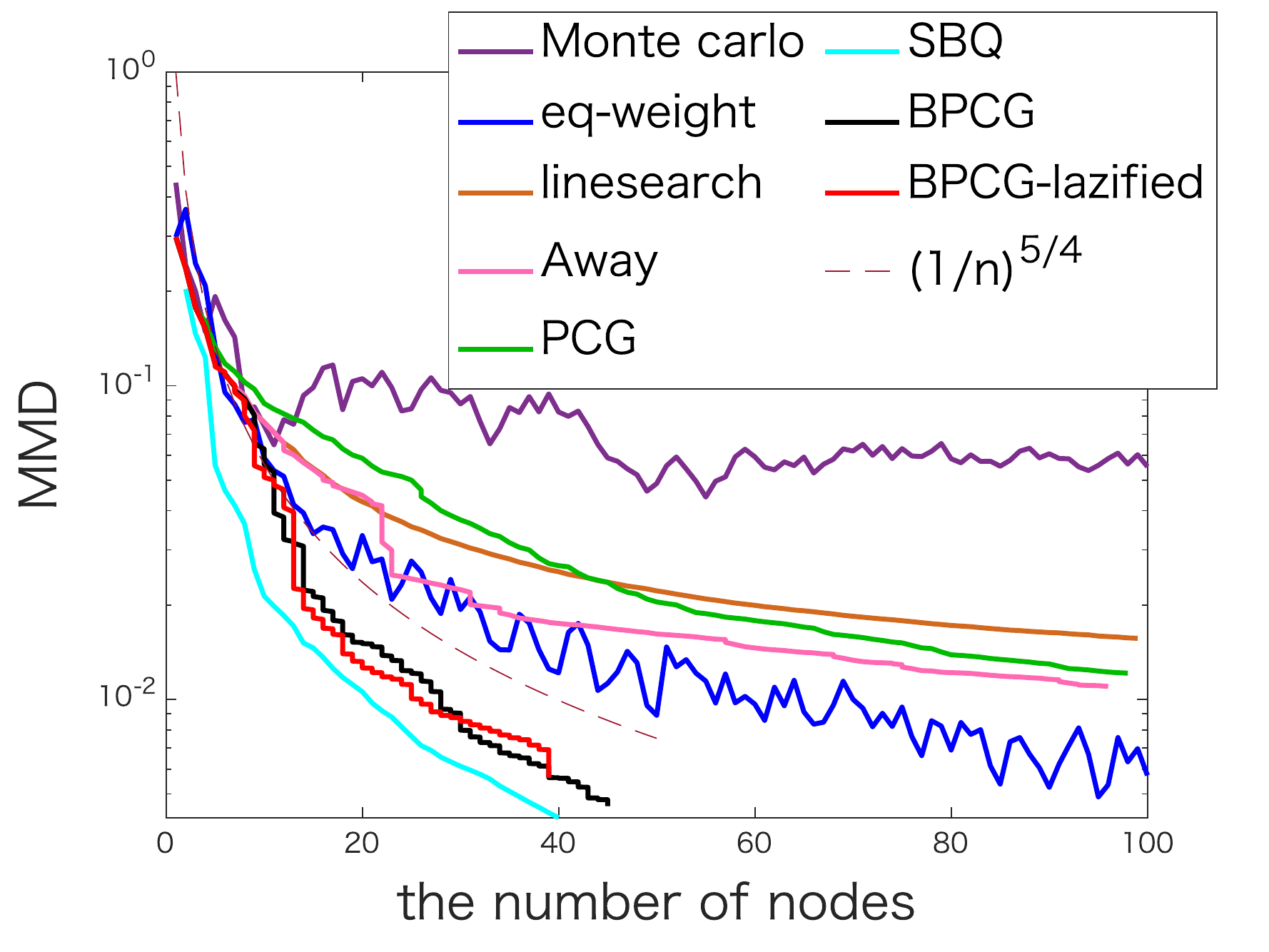}
        \subcaption{MMD for the number of nodes}
      \end{minipage}
    \end{tabular}
    \caption{Mat\'{e}rn kernel ($\nu=3/2$)}
    \label{matern32}
  \end{figure}

The domain $\Omega$ is $[-1, 1]^d$ and the probability distribution is a uniform distribution. 
First, we see the case $\nu=\frac{3}{2}$ and $d=2$. In this case, the optimal rate is $n^{-\frac{5}{4}}$. The result is  \autoref{matern32}. For sparsity, BPCG methods have significant performance and they achieve the convergence speed comparable to the optimal rate. For computational time, the lazified BPCG have good performance. This is because in the lazified algorithm, we only access the active set and this reduces the computational cost considerably. Although, BPCG methods are not superior to other methods for the number of the iterations, we can see the effectiveness of BPCG methods.

 Moreover, the same good performance can be seen in the case $\nu= \frac{5}{2}$ and $d=2$, which is shown in  \autoref{matern52}. BPCG algorithms also achieves the convergence rate which is competitive with the optimal rate $n^{-\frac{7}{4}}$.

 \begin{figure}[t]
 \begin{tabular}{cc}
 \begin{minipage}[h]{0.45\linewidth}
    \centering
 \includegraphics[keepaspectratio, scale=0.3]{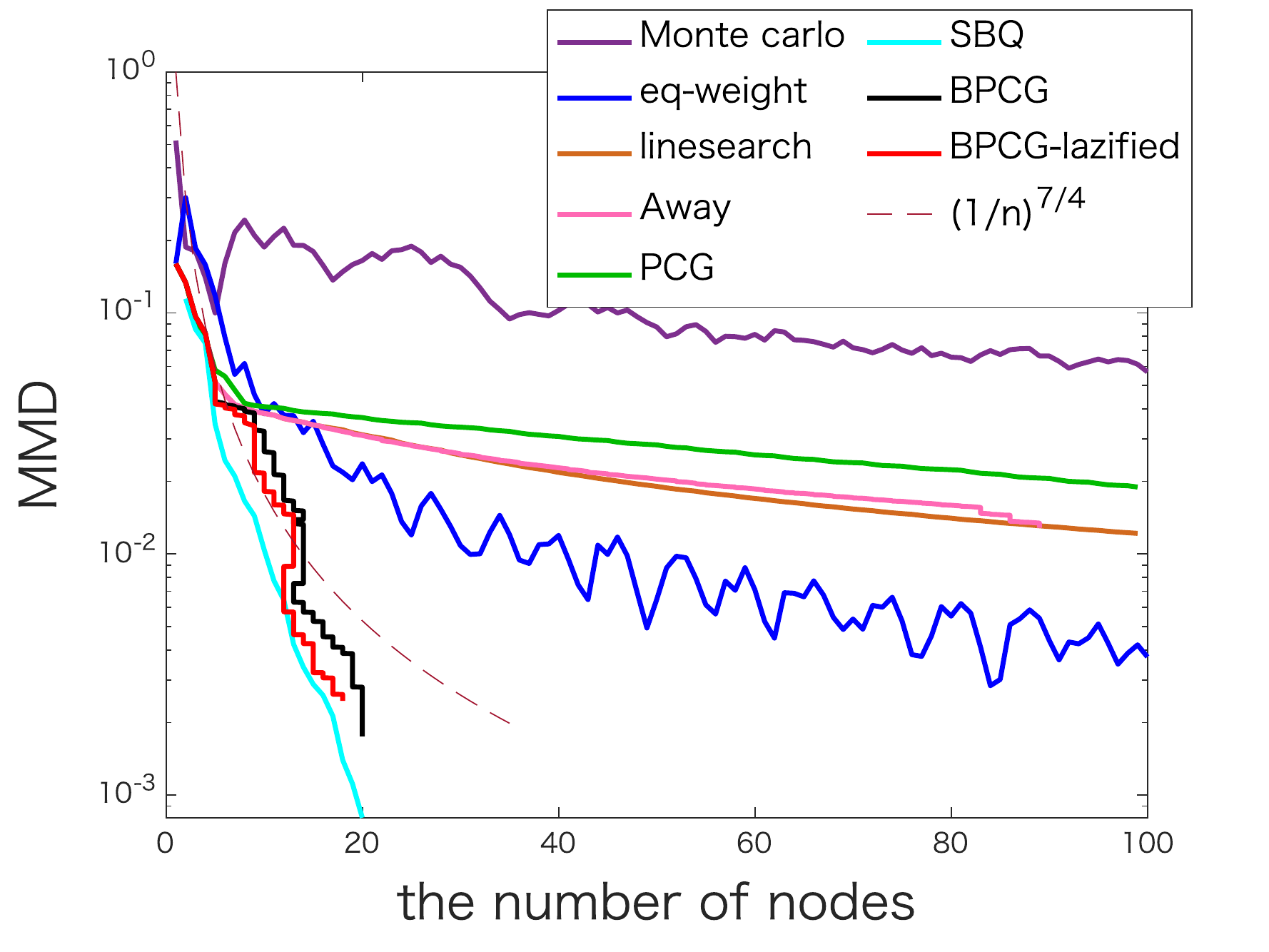}
 \end{minipage}
 \begin{minipage}[h]{0.45\linewidth}
 \centering
 \includegraphics[keepaspectratio, scale=0.3]{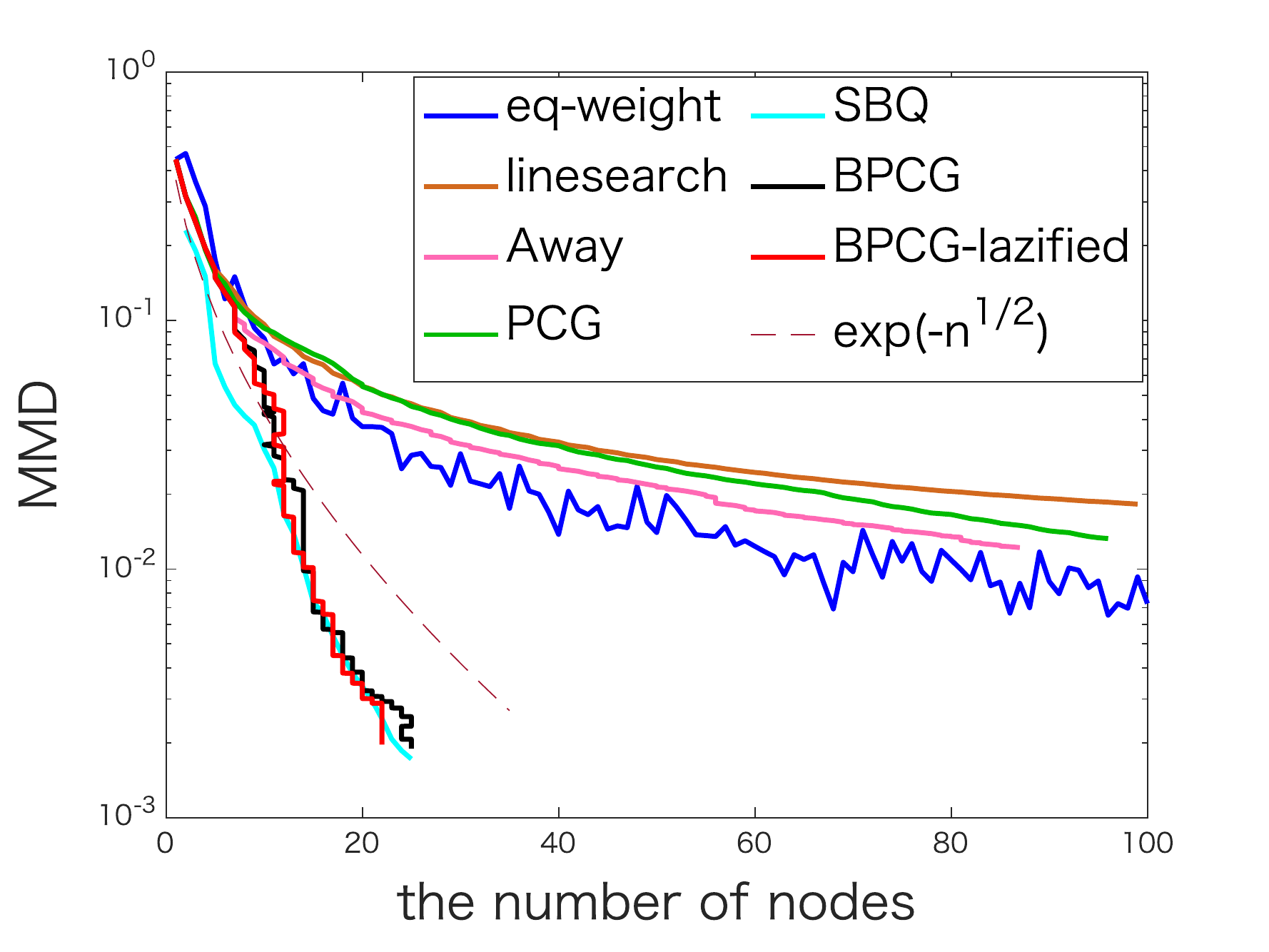}
  \end{minipage}
  \end{tabular}
 \caption{Mat\'{e}rn kernel ($\nu=5/2$) (left) and Gaussian kernel (right)}
 \label{gaussian}
 \label{matern52}
\end{figure}

\subsubsection{Gaussian kernel}
Next, we treat the gaussian kernel $K(x, y)= \mathrm{exp}(-\| x - y \|_2 ^2)$. The domain is $\Omega= [-1, 1]^d$ and the density function of the  distribution on $\Omega$ is $\frac{1}{C} \mathrm{exp}\left( -\| x \|^2  \right)$, where $C= \int_{\Omega} \mathrm{exp}\left( -\| x \|^2  \right) \mathrm{d}x$. In this case, we see the case $d=2$ and the result is \autoref{gaussian}. We note that $\mathrm{exp}(-n^{\frac{1}{2}})$ is the exponential factor of the upper bound of convergence rate which appears in several works on gaussian kernel, for example, \cite{wendland2004scattered,karvonen2021integration}. We see that BPCG methods are competitive with SBQ and the $\mathrm{exp}(-n^{\frac{1}{2}})$ rate and outperform significantly other methods.

In addition, we performed the experiments for a mixture Gaussians on $[-1,1]^2$. As shown in \autoref{mixture_gauss}, we have almost the same result as that of the ordinary Gaussian distribution. We note that in the left figure in \autoref{mixture_gauss}, the distribution function takes large values as the color gets closer to yellow and takes small values as the color gets closer to blue. 

\begin{figure}[h]
 \begin{tabular}{cc}
 \begin{minipage}[h]{0.45\linewidth}
    \centering
 \includegraphics[keepaspectratio, scale=0.3]{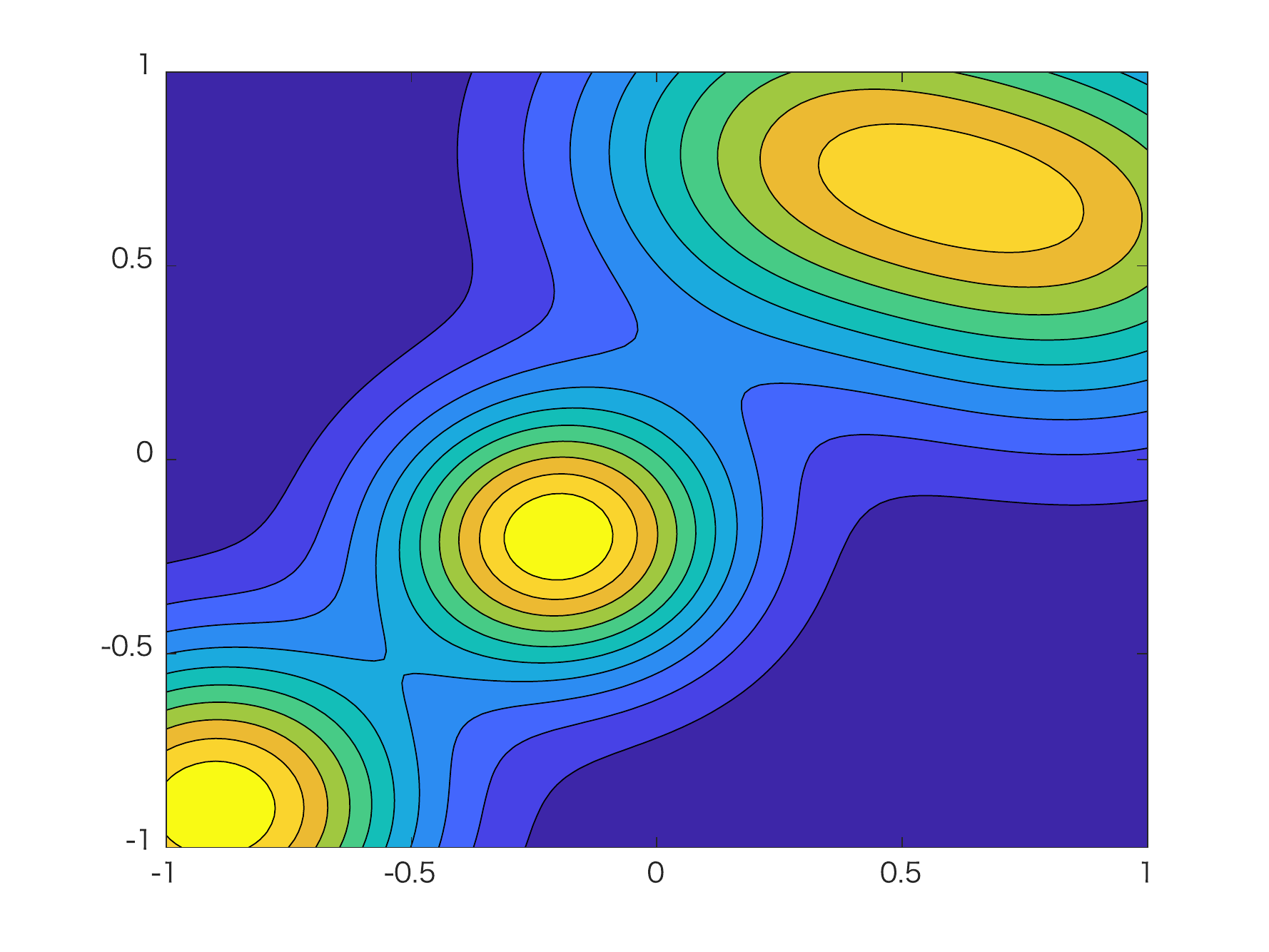}
 \end{minipage}
 \begin{minipage}[h]{0.45\linewidth}
 \centering
 \includegraphics[keepaspectratio, scale=0.3]{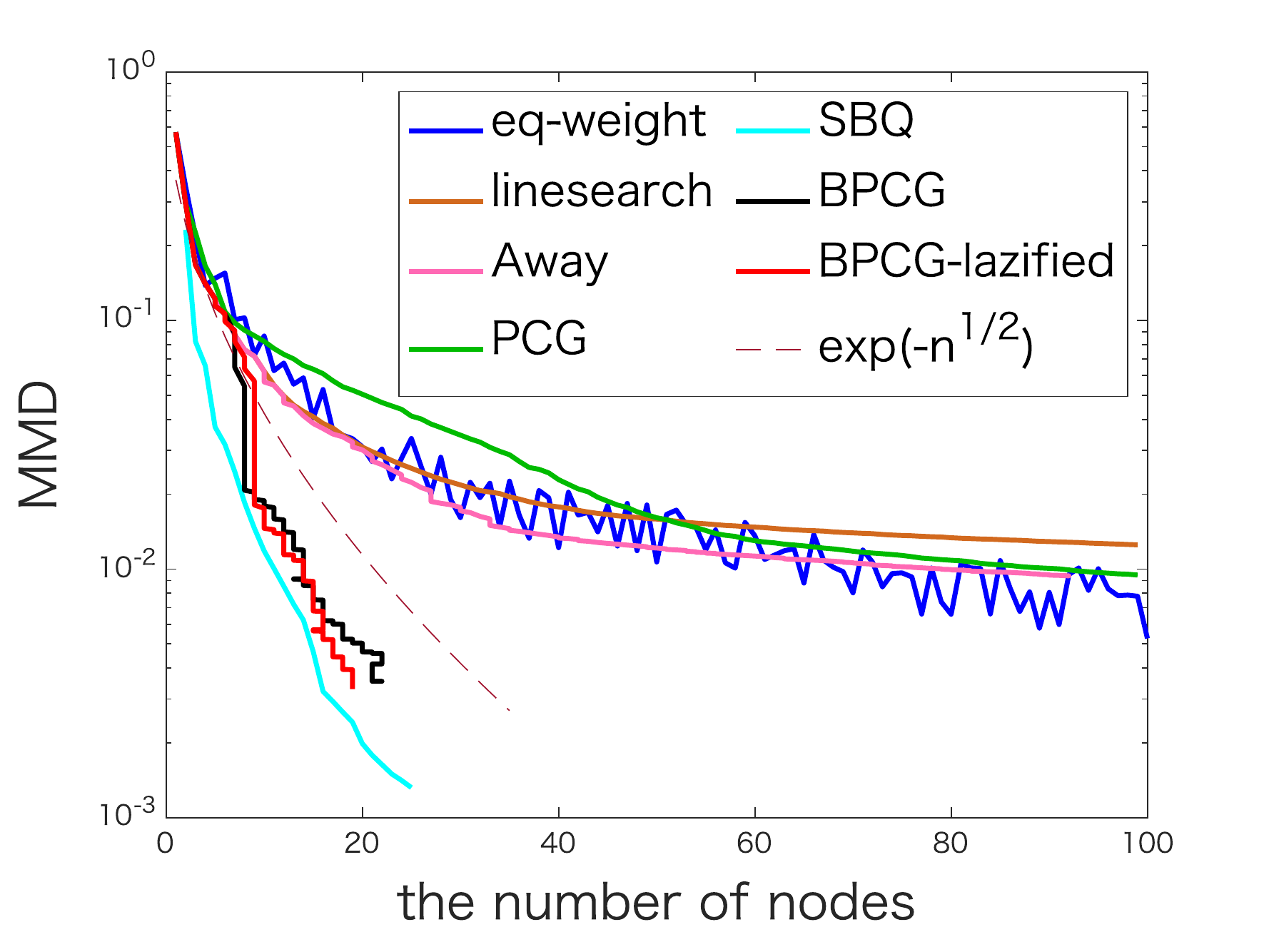}
  \end{minipage}
  \end{tabular}
 \caption{Contour of the mixture gausssian distribution (left) and MMD for the number of nodes(right). }
\label{mixture_gauss}
\end{figure}

\section{Conclusion}

The proposed Blended Pairwise Conditional Gradient (BPCG) algorithms yields very sparse solutions fast with very high speed both in convergence in iterations as well as time. 
It does not exhibit any swap steps, which provides state-of-the-art convergence guarantees for the strongly convex case as well as applies application to the infinite-dimensional case. We have analyzed its convergence property and
exemplified its real performance via the numerical experiments. 
The BPCG works well for application to kernel herding, the infinite-dimensional case, in that it provides small MMD with a small number of nodes. 
A main avenue for future work will be tighter estimates for the convergence rate of the BPCG in various cases, in particular with regards to sparsity.

\bibliography{refs.bib}

\begin{thebibliography}{}

\bibitem[Bach et~al., 2012]{10.5555/3042573.3042747}
Bach, F., Lacoste-Julien, S., and Obozinski, G. (2012).
\newblock On the equivalence between herding and conditional gradient
  algorithms.
\newblock In {\em Proceedings of the 29th International Coference on
  International Conference on Machine Learning}, ICML'12, pages 1355--1362,
  Madison, WI, USA. Omnipress.

\bibitem[Braun et~al., 2019a]{pok18bcg}
Braun, G., Pokutta, S., Tu, D., and Wright, S. (2019a).
\newblock Blended conditional gradients: the unconditioning of conditional
  gradients.
\newblock In {\em Proceedings of the 36th International Conference on Machine
  Learning ({PMLR})}, volume~97, pages 735--743.

\bibitem[Braun et~al., 2017]{pok17lazy}
Braun, G., Pokutta, S., and Zink, D. (2017).
\newblock Lazifying conditional gradient algorithms.
\newblock In {\em Proceedings of the 34th International Conference on Machine
  Learning}, pages 566--575.

\bibitem[Braun et~al., 2019b]{BPZ2017jour}
Braun, G., Pokutta, S., and Zink, D. (2019b).
\newblock Lazifying conditional gradient algorithms.
\newblock {\em Journal of Machine Learning Research ({JMLR})}, 20(71):1--42.

\bibitem[Briol et~al., 2019]{10.1214/18-STS683}
Briol, F.-X., Oates, C.~J., Girolami, M., Osborne, M.~A., and Sejdinovic, D.
  (2019).
\newblock {Rejoinder: Probabilistic Integration: A Role in Statistical
  Computation?}
\newblock {\em Statistical Science}, 34(1):38 -- 42.

\bibitem[Chen et~al., 2010]{10.5555/3023549.3023562}
Chen, Y., Welling, M., and Smola, A. (2010).
\newblock Super-samples from kernel herding.
\newblock In {\em Proceedings of the Twenty-Sixth Conference on Uncertainty in
  Artificial Intelligence}, UAI'10, pages 109--116, Arlington, Virginia, USA.
  AUAI Press.

\bibitem[Combettes and Pokutta, 2020]{combettes20boostfw}
Combettes, C.~W. and Pokutta, S. (2020).
\newblock Boosting {F}rank-{W}olfe by chasing gradients.
\newblock In {\em Proceedings of the 37th International Conference on Machine
  Learning}, pages 2111--2121.

\bibitem[Frank and Wolfe, 1956]{fw56}
Frank, M. and Wolfe, P. (1956).
\newblock An algorithm for quadratic programming.
\newblock {\em Naval Research Logistics Quarterly}, 3(1--2):95--110.

\bibitem[Gu{\'e}lat and Marcotte, 1986]{gm86}
Gu{\'e}lat, J. and Marcotte, P. (1986).
\newblock Some comments on {W}olfe's `away step'.
\newblock {\em Mathematical Programming}, 35(1):110--119.

\bibitem[Husz\'{a}r and Duvenaud, 2012]{10.5555/3020652.3020694}
Husz\'{a}r, F. and Duvenaud, D. (2012).
\newblock Optimally-weighted herding is bayesian quadrature.
\newblock In {\em Proceedings of the Twenty-Eighth Conference on Uncertainty in
  Artificial Intelligence}, UAI'12, pages 377--386, Arlington, Virginia, USA.
  AUAI Press.

\bibitem[Jaggi, 2013]{jaggi13fw}
Jaggi, M. (2013).
\newblock Revisiting {F}rank-{W}olfe: projection-free sparse convex
  optimization.
\newblock In {\em Proceedings of the 30th International Conference on Machine
  Learning}, pages 427--435.

\bibitem[Kanagawa et~al., 2018]{kanagawa2018gaussian}
Kanagawa, M., Hennig, P., Sejdinovic, D., and Sriperumbudur, B.~K. (2018).
\newblock Gaussian processes and kernel methods: A review on connections and
  equivalences.
\newblock {\em arXiv preprint arXiv:1807.02582}.

\bibitem[Karvonen et~al., 2021]{karvonen2021integration}
Karvonen, T., Oates, C., and Girolami, M. (2021).
\newblock Integration in reproducing kernel hilbert spaces of gaussian kernels.
\newblock {\em Mathematics of Computation}.

\bibitem[Kerdreux et~al., 2019]{kerdreux2018restarting}
Kerdreux, T., d'Aspremont, A., and Pokutta, S. (2019).
\newblock Restarting {F}rank-{W}olfe.
\newblock In {\em Proceedings of the 22nd International Conference on
  Artificial Intelligence and Statistics}, pages 1275--1283.

\bibitem[Kerdreux et~al., 2021]{UniformConvexFW_2020}
Kerdreux, T., d'Aspremont, A., and Pokutta, S. (2021).
\newblock Projection-free optimization on uniformly convex sets.
\newblock In {\em Proc. Artificial Intelligence and Statistics ({AISTATS})}.

\bibitem[Lacoste-Julien and Jaggi, 2015]{lacoste15}
Lacoste-Julien, S. and Jaggi, M. (2015).
\newblock On the global linear convergence of {F}rank-{W}olfe optimization
  variants.
\newblock In Cortes, C., Lawrence, N.~D., Lee, D.~D., Sugiyama, M., and
  Garnett, R., editors, {\em Advances in Neural Information Processing Systems
  28}, pages 496--504. Curran Associates, Inc.

\bibitem[Lacoste-Julien et~al., 2015]{lacoste2015sequential}
Lacoste-Julien, S., Lindsten, F., and Bach, F. (2015).
\newblock Sequential kernel herding: Frank-wolfe optimization for particle
  filtering.
\newblock In {\em Artificial Intelligence and Statistics}, pages 544--552.
  PMLR.

\bibitem[Levitin and Polyak, 1966]{polyak66cg}
Levitin, E.~S. and Polyak, B.~T. (1966).
\newblock Constrained minimization methods.
\newblock {\em USSR Computational Mathematics and Mathematical Physics},
  6(5):1--50.

\bibitem[Mortagy et~al., 2020]{MGP2020walking}
Mortagy, H., Gupta, S., and Pokutta, S. (2020).
\newblock {Walking in the Shadow: A New Perspective on Descent Directions for
  Constrained Minimization}.
\newblock {\em {to appear in Proceedings of NeurIPS}}.

\bibitem[Novak, 2006]{novak2006deterministic}
Novak, E. (2006).
\newblock Deterministic and stochastic error bounds in numerical analysis.

\bibitem[Oettershagen, 2017]{oettershagen2017construction}
Oettershagen, J. (2017).
\newblock {\em Construction of optimal cubature algorithms with applications to
  econometrics and uncertainty quantification}.

\bibitem[Pedregosa et~al., 2020]{pedregosa2018step}
Pedregosa, F., Negiar, G., Askari, A., and Jaggi, M. (2020).
\newblock Linearly convergent {F}rank--{W}olfe with backtracking line-search.
\newblock In {\em Proc. Artificial Intelligence and Statistics ({AISTATS})}.

\bibitem[Rinaldi and Zeffiro, 2020]{rinaldi2020unifyfree}
Rinaldi, F. and Zeffiro, D. (2020).
\newblock A unifying framework for the analysis of projection-free first-order
  methods under a sufficient slope condition.
\newblock {\em arXiv preprint arXiv:2008.09781}.

\bibitem[Tsuji and Tanaka, 2021]{tsuji2021acceleration}
Tsuji, K. and Tanaka, K. (2021).
\newblock Acceleration of the kernel herding algorithm by improved gradient
  approximation.
\newblock {\em arXiv preprint arXiv:2105.07900}.

\bibitem[Welling, 2009]{welling2009herding}
Welling, M. (2009).
\newblock Herding dynamical weights to learn.
\newblock In {\em Proceedings of the 26th Annual International Conference on
  Machine Learning}, pages 1121--1128.

\bibitem[Wendland, 2004]{wendland2004scattered}
Wendland, H. (2004).
\newblock {\em Scattered data approximation}, volume~17.
\newblock Cambridge university press.

\bibitem[Wolfe, 1970]{wolfe70}
Wolfe, P. (1970).
\newblock Convergence theory in nonlinear programming.
\newblock In {\em Integer and Nonlinear Programming}, pages 1--36.
  North-Holland, Amsterdam.

\end{thebibliography}
\bibliographystyle{apalike}

\end{document}